\theoremstyle{plain}
\newtheorem*{theorem*}{Theorem}
\newtheorem*{lemma*} {Lemma}
\newtheorem*{corollary*} {Corollary}
\newtheorem*{proposition*} {Proposition}
\newtheorem{theorem}{Theorem}[section]
\newtheorem{lemma}[theorem]{Lemma}
\newtheorem{corollary}[theorem]{Corollary}
\newtheorem{proposition}[theorem]{Proposition}
\theoremstyle{remark}
\newtheorem{remark}[theorem]{Remark}
\newtheorem{fact}[theorem]{Fact}
\newtheorem*{definition}{Definition}
\theoremstyle{definition}
\def \R {\mathbb{R}}
\def \Z {\mathbb{Z}}
\def \C {\mathbb{C}}
\def \P {\mathbb{P}}
\def \bn{\begin{enumerate}}
\def \en{\end{enumerate}}
\def \bdm{\begin{displaymath}}
\def \edm{\end{displaymath}}
\def \bp{\begin{proof}}
\def\ep{\end{proof}}
\def\vd{\vdots}
\def\dd{\ddots}
\def\U+e{(U^+)^e}
\def\be{\begin{equation}}
\def\ee{\end{equation}}
\def\mfg{\mathfrak{g}}
\def\mft{\mathfrak{t}}
\def\mfp{\mathfrak{p}}
\def\mfb{\mathfrak{b}}
\def\mo{\mathcal{O}}
\def\T{\triangle}
\begin{document}

\title [Conjecture $\mathcal{O}$  for $G/P$]{On the Conjecture $\mathcal{O}$ of GGI for  $G/P$}
\author{Daewoong Cheong}
\address{Korea Institute for Advanced Study \\
85 Hoegiro, Dongdaemun-gu\\
Seoul, 130-722, Korea}\email{daewoongc@kias.re.kr}
\author{Changzheng Li}
\address{Department of Mathematics, Sun Yat-Sen University, Guangzhou 510275, P.R. China;}
\address{ Center for Geometry and Physics, Institute for Basic Science (IBS), Pohang 790-784,   Korea}
\email{czli@ibs.re.kr}
\date{Dec. 1, 2014}
\subjclass[2000]{14N35 and 20G05}

\begin{abstract}
In this paper, we show that general homogeneous manifolds $G/P$ satisfy Conjecture $\mathcal{O}$ of Galkin, Golyshev and Iritani which `underlies' Gamma conjectures I and II of them. Our main tools are the quantum Chevalley formula for $G/P$ and a theory on nonnegative matrices including Perron-Frobenius theorem.
\end{abstract}
\maketitle

\section{INTRODUCTION}
Let $X$ be a Fano manifold, i.e., a smooth projective variety whose anti-canonical line bundle is ample.  The quantum cohomlogy ring $H^\star(X,\C)$\footnote{We use this notation for the quantum cohomology ring with the multiplication $\star$, and without quantum variables.} of $X$ is a certain deformation of the classical cohomology ring $H^*(X,\C)$ (\S 2.4 below). For $\sigma\in H^\star(X,\C),$ define the quantum multiplication operator $[\sigma]$ on $H^\star(X,\C)$ by $[\sigma](\tau)=\sigma\star \tau$ for $\tau \in H^\star(X,\C)$, where $\star$ denotes the quantum product in $H^\star(X,\C)$. Let $\delta_0$ be the absolute value of a maximal modulus eigenvalue of the operator $[c_1(X)]$, where $c_1(X)$ denotes the first Chern class of the tangent bundle of $X.$ In \cite{GGI},
Galkin, Golyshev and Iritani say that $X$ satisfies Conjecture $\mathcal{O}$
if \bn
\item $\delta_0$ is an eigenvalue of $[c_1(X)]$.
\item The multiplicity of the eigenvalue $\delta_0$ is one.
\item If $\delta$ is an eigenvalue of  $[c_1(TX)]$ such that $|\delta|=\delta_0,$ then $\delta =\delta_0\xi$ for some $r$-th root of unity, where $r$ is the Fano index of $X.$

\en

\smallskip

In fact, in addition to Conjecture $\mathcal{O}$, Galkin, Golyshev and Iritani  proposed two more conjectures called Gamma conjectures I, II, which can be stated under the Conjecture $\mo.$  Let us briefly introduce Gamma conjectures I, II in order to explain how it underlies them. Consider the quantum connection of Dubrovin
$$\nabla_{z\partial_z}=z\frac{\partial}{\partial z}-\frac{1}{z}(c_1(X)\star)+\mu,$$ acting on $H^*(X,\C)\otimes \C[z,z^{-1}]$, where $\mu$ is the grading operator on $H^*(X)$ defined by $\mu(\tau)=(k-\frac{\mathrm{dim}X}{2})\tau$ for $\tau\in H^{2k}(X,\C).$
This has a regular singularity at $z=\infty$ and an irregular singularity at $z=0.$
 Flat sections near $z=\infty$ can be constructed through flat sections near $z=0$ classified by their exponential growth order, and they are put into correspondence with cohomology classes. To be precise, if $X$ satisfies  Conjecture $\mathcal{O}$, we can take a flat section $s_0(z)$ with the smallest asymptotics $ \sim e^{-\delta_0/z}$ as $z\rightarrow +0$ along $\R_{>0}$. We transport $s_0(z)$ to $z=\infty$ and identify the corresponding class $A_X$ called the principal asymptotic class of $X$. Then Gamma conjecture I states that the cohomolgy class $A_X$ is equal to the Gamma class $\hat{\Gamma}_X$.
Here $\hat{\Gamma}_X:=\prod_{i=1}^n\Gamma(1+\vartheta_i)\in H^*(X)$, where $\vartheta_i$ are the Chern roots of the tangent bundle $TX$ for $i=1,...,n$. Under further assumption of semisimplicity of the ring $H^\star(X),$ we can identify cohomology classes $A_\delta$ corresponding to each eigenvalue $\delta$ in similar way. The classes $A_\delta$ form a basis of $H^*(X,\C).$ Then Gamma conjecture II, a refinement of a part of Dubrovin's conjecture (\cite{Du}), states that there is an exceptional collection $\{E_\delta \hspace{0.04in}| \hspace{0.05in}\delta \hspace{0.05in} \mathrm{eignevalues\hspace{0.05in} of} \hspace{0.06in}[c_1(X)]\hspace{0.03in}\}$ of the derived category $D^b_{\mathrm{coh}}(X)$ such that for each $\delta$,   $$A_\delta= \hat{\Gamma}_X \mathrm{Ch}(E_\delta),$$ where $\mathrm{Ch}(E_\delta):=\sum_{k=0}^{\dim X}(2\pi \mathbf{i})^k \mathrm{ch}_k(E_\delta)$ is the modified Chern character. In this situation, Gamma conjecture I says that the exceptional object $\mo_X$ corresponds to $\delta_0.$
See \cite{GGI} and \cite{Du} for details on these materials.

\smallskip

As far as we know, the Conjecture $\mo$ has thus far been proved for the ordinary, Lagrangian and orthogonal Grassmannians. For the ordinary Grassmannian,
 Galkin, Golyshev and Iritani (\cite{GGI}) proved Conjecture $\mo$ together with Gamma conjectures I, II by using the quantum Satake of Golyshev and Manivel (\cite{GM}).  In fact we notice  that there were already two earlier papers proving Conjecture $\mo$ for the ordinary Grassmannian.  In $2006,$ Galkin and Golyshev (\cite{GG1}) gave a very short proof of Conjecture $\mo$ using a theorem of Seibert and Tian (\cite{ST}) and some elementary considerations. In $2003,$ Rietsch(\cite{Riet1})  gave a full description of eigenvalues and corresponding (simultaneous) eigenvectors of quantum multiplication operators for the Grassmannian, which actually proves Conjecture $\mo$, by using a result of Peterson   and some combinatorics.
Very recently, the first author  proved the Conjecture $\mo$ for Lagrangian and orthogonal Grassmannian (\cite{CH3}), following Rietsch (\cite{Riet1}).

 As for toric Fano  manifolds, Galkin, Golyshev and Iritani(\cite{GGI}) proved Gamma conjectures I, II modulo Conjecture $\mo$, and then Galkin (\cite{G1}) has made some progress on Conjecture $\mo$ by showing that the quantum cohomology ring of a toric Fano manifold contains a field as a direct summand.
 \smallskip

 It is natural to consider general homogeneous spaces $X=G/P$ as  next targets for Gamma conjectures. Indeed, here we prove Conjecture $\mo$ for homogeneous spaces as a first step into this project. A scheme of proof of Conjecture $\mo$ is to use the so-called quantum Chevalley formula which computes the multiplication $\sigma_1 \star \sigma_2$ of two basis elements  with $\sigma_1$ or $\sigma_2$ in $H^2(X,\Z)$, and a theory on nonnegative matrices including Perron-Frobenius theorem.

 To be precise, first note that the structure constants of the quantum product in $H^\star(X,\C)$ in the  basis of Schubert classes are three-point genus zero Gromov-Witten invariants. They  are actual counts of holomorphic spheres satisfying appropriate conditions, and are therefore nonnegative. Hence the matrix $M(X)$ of $[c_1(X)]$ with respect to this basis is a nonnegative matrix. Therefore once we prove that  $M(X)$ is irreducible, then by the celebrated Perron-Frobenius theorem (\S $3.1$ below), the conditions $(1)$ and $(2)$ are automatically satisfied. We remark that  the use of Perron-Frobenius theorem in the proof of $(1)$ and $(2)$ is due to Kaoru Ono (\cite[Remark 3.17]{GGI}).
However, the Perron-Frobenius theorem does not assert that the Fano index $r$ of $X$ is equal to the number $h$ of eigenvalues of maximal modulus, which is to be shown for the condition $(3).$ Towards this equality, it is already known that $r$ divides $h$ even for general Fano manifolds by \cite[Remark 3.1.3]{GGI}.   Then to show that conversely $h$ divides $r$, we bring a theory on directed graphs,  a disguise of  nonnegative matrices, into our situation and construct a certain number of cycles at a fixed vertex in the directed graph in question. The lengths of these cycles are used to show that $h$, in turn, divides $r$, and hence $r=h.$ This fact together with Proposition \ref{prop-angle} proves the condition $(3)$. Lastly, we point out that one of the advantages of our approach is that if   one of the eigenvalues  (of unnecessarily maximal modulus) of $[c_1(X)]$ is obtained, then one can recover other eigenvalues of the same modulus from the known eigenvalue by rotating it by a fixed angle depending on the Fano index of $X$ with the aid of Proposition \ref{prop-angle}.

{\it \textbf{Acknowledgements}.}  The authors would like to thank Sergey Galkin and Hiroshi Iritani for making comments on citations and pointing out some typos on an earlier version, and thank Leonardo C. Mihalcea for clarifying the reference on the nonvanishing of quantum products for general $G/P$. The
authors also thank the referee for various valuable comments.  The first author was
supported by National Researcher Program 2010-0020413 of NRF.
The second author was supported by IBS-R003-D1.

\section{Quantum cohomology of $G/P$}
We review some basic facts here.  Our readers can refer to \cite{Hu1, Hu2} for the details in the first subsection, and can refer to \cite{FW} and references therein  for  the rest.
\subsection{Notations}
Throughout this paper, $G$ denotes a complex, connected, semisimple, algebraic group, $B$ a fixed Borel subgroup, and $T$ a maximal torus in $B.$  As usual, $\mfg$, $\mfb$, and $\mft$ denote the Lie algebras of $G,B$ and $T$, respectively.  Denote the set of all roots by $R$.  Then we have a decomposition of root spaces $\mfg=\mft\oplus\bigoplus_{\alpha \in R}\mfg_\alpha.$ Let $\T$ be a set of simple roots  and $I$ be an indexing set for $\T$.
The parabolic subgroups $P$ of $G$ containing $B$ correspond to subsets $\triangle_P$ of $\triangle$. Let $I_P\subset I$ be the indexing subset for $\T_P,$ and $I^P=I\setminus I_P.$
Denote  the set of positive respectively negative roots relative to $B$ by $R^+$ respectively $R^-$. Let $R_P^+$ be the subset of positive roots which can be written as sums of roots in $\triangle_P$. Then the Lie algebra $\mfp$ has a decomposition of root spaces $\mfp=\mft\oplus\bigoplus_{\alpha \in R^+\sqcup (-R_P^+)}\mfg_\alpha$.

Let $W$ be the Wely group of $G,$ i.e., $W=N_G(T)/T.$ Then $W$ is generated by simple reflections $s_i$, $i\in I,$ where $s_{i}$ is a simple refection corresponding to $\alpha_i$.  For $u\in W,$ the length of $u$, denoted by $l(u),$ is defined to be the mimimum number of simple reflections whose product is $u.$ The Weyl group $W$ acts on $R$. Furthermore    for any $\gamma\in R^+$, there exist $w\in W$ and $i\in I$ such that $\gamma=w(\alpha_i)$. We then have a reflection $s_\gamma:=ws_iw^{-1}$ and the coroot $\gamma^\vee:=w(\alpha_i^\vee)$, which are  independent of the choices the choices of $w, i$. There is a unique  element of maximal length in $W$,   denoted by $w_0$. Then the opposite Borel subgroup $B^-$ is written as $B^-=w_0Bw_0.$

 Let $W_P$ be the subgroup of $W$ generated by the generators $s_{i}$ with $i \in I_P.$ Note that the generators $s_i$ with $i \in I_P$ are precisely ones such that $s_i \subset P$.  {Denote by $w_P$ the longest   element of $W_P.$  We will write $[u]$ for cosets $uW_P$ in $W/W_P$.  We will write $[u]$ for cosets $uW_P$ in $W/W_P$. It is well-known that each coset $[u]$ has a unique representative of minimal length. Let $W^P$ be the subset of $W$ consisting of such representatives in the cosets. Let $w_0^P$ be the minimal length representative in $[w_0]$, and hence $w_0^P$ is the   longest  element in  $W^P$. The length of $[u]$, denoted as $l([u])$, is defined to be the length of the minimal length representative in the coset $[u].$
 The dual  of $u \in W^P$, denoted $u^\vee$, is defined to be the minimal length representative of the coset $[w_0u]$.  Note that  $l(w_0^P)=\mathrm{dim}G/P$ and $l(u^\vee)=l(w_0^P)-l(u)$ for $u\in W^P$.
 Let $0^P$ be  the element of $W^P$ of minimum length. In fact, $0^P=\rm id$ is  the identity  of $W$, and so $l(0^P)=0.$

 \subsection{Cohomology}
 For convenience, throughout we will identify  the element $u\in W^P$ with  the element $[v]\in W/W_P$ if $u$ is the minimal length representative in $[v].$
For $u\in W^P,$ let $X(u)=\overline{BuP/P}$ be the Schubert variety corresponding to $u$ and $Y(u)=\overline{B^-uP/P}$ the opposite Schubert variety corresponding to $u$. Then $X(u)$ is a subvariety of $G/P$ of dimension $l(u)$ and $Y(u)$ is a subvariety of $G/P$ of codimension $l(u)$. Let $\sigma(u)$ respectively $\sigma_u$ be the cohomology class  $[X(u)]$ respectively $[Y(u)]$. Then we have the following classical results on $H^*(X)=H^*(X, \mathbb{Z})$, where $n=\dim_{\mathbb{C}}G/P$.
\bn
\item
For $u\in W^P$, $\sigma_u \in H^{2l(u)}(X)$,  $ \sigma(u)\in H^{2n-2l(u)}(X),$ and $\sigma_u=\sigma(u^\vee).$
\item
$H^*(X)=\bigoplus_{u\in W^P}\Z\sigma_u=\bigoplus_{u\in W^P}\Z\sigma(u).$ In particular, $H^2(X)=\bigoplus_{i\in I^P}\Z \sigma_{s_i}$, and $H^{2n-2}(X)=\bigoplus_{i \in I^P}\Z \sigma(s_i)$.
\item
$\int_{X}\sigma_u \cup \sigma_v=1$ if $v=u^\vee$, and $ 0$ otherwise.
\en

\subsection{Degrees}
Since the cohomology group $H^{2n-2}(X)$ can be canonically identified with the homology group   $H_2(X)$ by Poincar\'{e} duality, elements of $H^{2n-2}(X)$ may be referred to as  curve classes.  By a $degree$ $d$, we mean an effective class in  $H^{2n-2}(X)$, i.e., a nonnegative integral linear combination of the Schubert generators $\sigma(s_i)$ with $i\in I^P.$   A degree $d=\sum_{i\in I^P} d_i\sigma(s_i)$ can  be identified with $(d_{i})_{i\in I^P}$.

For $\alpha \in R^+$,  write $\alpha=\sum_{i\in I} m_{\alpha,\alpha_i}\alpha_i$ for some $  m_{\alpha,\alpha_i}\in \Z_{\geq0}$. Then we define the $degree$ of $\alpha$ as
$$d(\alpha)=\sum_{i\in I^P} m_{\alpha,\alpha_i}\frac{(\alpha_i,\alpha_i)}{(\alpha,\alpha)}\sigma(s_i).$$

Note that  $d(\alpha_i)=\sigma(s_i)$ if $i\in I^P$, and $d(\alpha_i)=0$ otherwise, since $ m_{\alpha_i,\alpha_j}=\delta_{i,j}$ for $i,j\in I$.
Let $h_{\alpha}=\frac{2\alpha}{(\alpha,\alpha)}$ and let  $\omega_{\alpha_i}$ be the fundamental weight corresponding to $\alpha_i,$ so that $h_{\alpha_i}$ and  $\omega_{\alpha_i}$ are dual bases for $i\in I$. Then $h_\alpha(\omega_{\alpha_i})=m_{\alpha, \alpha_i}(\alpha_i,\alpha_i)\slash(\alpha,\alpha),$ and hence we have $$d(\alpha)=\sum_{i\in I^P}h_\alpha(\omega_{\alpha_i})\sigma(s_i).$$
We notice that if $\alpha$ is a root, then the aforementioned $h_\alpha$ is identified with the coroot $\alpha^\vee$ via the Killing form $(\cdot, \cdot)$.

Set $$\rho_P=\frac{1}{2}\sum_{\alpha\in R^+\setminus R^+_P} \alpha ,\hspace{0.2in}  n_\alpha=4\frac{(\rho_P,\alpha)}{(\alpha,\alpha)}, \hspace{0.2in}\mathrm{and} \hspace{0.2in} n_i:=n_{\alpha_i} \mbox{ for }  i\in I^P.$$
\begin{lemma}[Lemma 3.5 of \cite{FW}]\label{lemma-Chernclass}
The first Chern class of $X=G/P$ is given by
$$c_1(X)= \sum_{i\in I^P}n_{i}\sigma_{s_i}=2\sum_{i\in I^P}h_{\alpha_i}(\rho_P)\sigma_{s_i}.$$
\end{lemma}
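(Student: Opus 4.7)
The plan is to pull $c_1(X)$ back to the full flag variety $G/B$ along the canonical projection $\pi \colon G/B \to G/P$, compute the pullback there via the standard Chern class formulas, and then descend via the injectivity of $\pi^*$ on $H^2$. First, observe that the two claimed expressions are manifestly equivalent: from $n_i = 4(\rho_P,\alpha_i)/(\alpha_i,\alpha_i)$ and $h_{\alpha_i} = 2\alpha_i/(\alpha_i,\alpha_i)$ we get $h_{\alpha_i}(\rho_P) = 2(\rho_P,\alpha_i)/(\alpha_i,\alpha_i)$, hence $n_i = 2 h_{\alpha_i}(\rho_P) = \langle 2\rho_P, \alpha_i^\vee\rangle$ after the Killing-form identification of $h_{\alpha_i}$ with $\alpha_i^\vee$. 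So it suffices to prove the first equality.

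From the tangent exact sequence $0 \to T_\pi \to T(G/B) \to \pi^* T(G/P) \to 0$ for the smooth fibration $\pi$ with fiber $P/B$, one has $\pi^* c_1(X) = c_1(G/B) - c_1(T_\pi)$. The root space decompositions $T_{eB}(G/B) \cong \bigoplus_{\alpha \in R^+}\mathfrak{g}_{-\alpha}$ and $T_{eB}(P/B) \cong \bigoplus_{\alpha \in R_P^+}\mathfrak{g}_{-\alpha}$ give, after taking determinants and identifying $G$-equivariant line bundles with weights in the standard way, $c_1(G/B) = c_1(L_{2\rho})$ and $c_1(T_\pi) = c_1(L_{2(\rho-\rho_P)})$, so
\[
\pi^* c_1(X) \;=\; c_1(L_{2\rho_P}),
\]
where $L_\lambda$ is the line bundle on $G/B$ attached to the integral weight $\lambda$ under the convention that $c_1(L_{\omega_i})$ equals the Schubert divisor $\sigma_{s_i}^{G/B}$ on $G/B$.

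Under this convention the Borel--Chern formula reads $c_1(L_\lambda) = \sum_{j \in I}\langle\lambda,\alpha_j^\vee\rangle\,\sigma_{s_j}^{G/B}$. Applying it to $\lambda = 2\rho_P$ and observing that for $j \in I_P$ the simple reflection $s_{\alpha_j}$ permutes $R^+\setminus R_P^+$ (it stabilizes the root subsystem $R_P$, and the only positive root it sends to a negative root is $\alpha_j \in R_P^+$), we get $s_{\alpha_j}(2\rho_P) = 2\rho_P$ and hence $\langle 2\rho_P,\alpha_j^\vee\rangle = 0$ for $j \in I_P$. Therefore
\[
\pi^* c_1(X) \;=\; \sum_{i \in I^P} n_i\,\sigma_{s_i}^{G/B}.
\]
Since $\pi^*\colon H^2(G/P) \to H^2(G/B)$ is an injection sending $\sigma_{s_i} \mapsto \sigma_{s_i}^{G/B}$ for each $i \in I^P$ (by pulling back the line bundles corresponding to fundamental weights indexed by $I^P$), we descend to $c_1(X) = \sum_{i \in I^P} n_i\,\sigma_{s_i}$, as desired.

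The main obstacle will be the careful tracking of sign and duality conventions: the tangent spaces decompose into \emph{negative} root spaces, while the Borel--Chern correspondence sending fundamental weights to Schubert divisors has a sign that varies between references, and one must verify that these choices combine to give $c_1(G/B) = c_1(L_{+2\rho})$ rather than its negative. An alternative, sign-robust route is to extract $n_i = \int_X c_1(X)\cup \sigma(s_i)$ by restricting $T(G/P)$ to the Schubert line $X(s_i) \cong \mathbb{P}^1$ (preserved by the $SL_2$ associated with $\alpha_i$): the $T$-equivariant splitting $T(G/P)|_{X(s_i)} \cong \bigoplus_{\alpha \in R^+\setminus R_P^+} L_\alpha$ yields total degree $\sum_{\alpha\in R^+\setminus R_P^+}\langle\alpha,\alpha_i^\vee\rangle = \langle 2\rho_P,\alpha_i^\vee\rangle = n_i$ directly, avoiding the need to invoke line bundle conventions on $G/B$.
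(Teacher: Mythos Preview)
The paper does not supply its own proof of this lemma: it is quoted verbatim as Lemma~3.5 of Fulton--Woodward~\cite{FW} and used as a black box. So there is nothing in the present paper to compare your argument against.

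That said, your proof is correct and is essentially the standard argument. Fulton--Woodward's original proof works directly on $G/P$: the tangent bundle $T(G/P)$ is the homogeneous bundle associated to the $P$-module $\mathfrak{g}/\mathfrak{p}\cong\bigoplus_{\alpha\in R^+\setminus R_P^+}\mathfrak{g}_{-\alpha}$, so its determinant is the line bundle associated to the character $-2\rho_P$ of $P$, and the Chern class of that line bundle is read off via the Borel presentation. Your detour through $G/B$ is equivalent---the exact sequence $0\to T_\pi\to T(G/B)\to\pi^*T(G/P)\to 0$ precisely subtracts the $R_P^+$ summands from the $R^+$ summands---but adds an unnecessary layer, since the same homogeneous-bundle computation can be carried out directly on $G/P$. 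Your alternative route, computing $n_i=\int_{X(s_i)}c_1(X)$ by restricting $T(G/P)$ to the Schubert line and summing the weights $\langle\alpha,\alpha_i^\vee\rangle$ over $\alpha\in R^+\setminus R_P^+$, is arguably the cleanest sign-independent verification and is worth keeping in mind; it is also the form in which the paper itself uses the lemma (see the sentence after Remark~\ref{remark-Fano index}). Your caution about sign conventions is well placed but ultimately resolvable: with the convention that $L_\lambda$ has $B$-action on the fiber over $eB$ by $-\lambda$ (so that $L_{\omega_i}$ is effective), the determinant of $\bigoplus_{\alpha\in R^+}\mathfrak{g}_{-\alpha}$ is indeed $L_{2\rho}$.
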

\begin{remark}\label{remark-Fano index} The coefficient $n_i$ is a positive integer for all $i\in I^P$. The positivity of  $n_i$ plays an important role in our proof together with the nonnegativity of the structure constants below (\S $2.4$). In the case   $P=B$, we have $R_B^+=\emptyset$ and $\rho_B=\sum_{i\in I} \omega_{\alpha_i}$.
\end{remark}
The Chern number   $\int_{X}c_1(X)\cup d(\alpha)$ of the degree $d(\alpha)$ is equal to $n_\alpha$. In particular, we have
  $n_i=\int_{[X(s_i)]}c_1(X)=d(\alpha_i)$
   for any $i\in I^P$. We notice  that   the Fano index of $X=G/P$ is given by
   $$r:=\mathrm{g.c.d}\hspace{0.03in}\{\hspace{0.02in}n_i\hspace{0.04in}|\hspace{0.04in} i\in I^P\}.$$

\subsection{Quantum cohomology of $G/P$} To define the quantum cohomology ring of $X$, we begin with Gromov-Witten invariants.
Given $u,v, w \in W^P$ and $d=\sum d_i\sigma(s_i)$ with $l(u)+l(v)+l(w)=\mathrm{dim}X+\int_Xc_1(X)\cdot d$, the \textit{three-pointed}, $genus$ $zero$ $Gromov$-$Witten$ $invariant$ associated with $u,v,w$ and $d$, denoted $c_{u,v}^{w,d}$, can be  defined as the number of morphisms $f:\P^1\rightarrow X$ of degree $d$ such that (fixed) general translates of $Y(u), Y(v)$ and $Y(w)$ pass through the three   points  $f(0),f(1)$ and $f(\infty)$,   respectively.

For each $i\in I^P,$ take a variable $q_i$, and let $\Z[q]$ be the polynomial ring with indeterminates $q_i, i\in I^P$.  We will regard  $\Z[q]$ as a graded $\Z$-algebra by assigning to $q_i$ the (complex) degree $n_i$. For a degree $d=\sum d_i\sigma(s_i),$ let $q^d$ stand for $\prod q_i^{d_i}$.
The quantum cohomology ring $qH^\star(X)$ of $X$,    as a $\mathbb{Z}[q]$-module, is defined to be $$qH^\star(X)=H^*(X)\otimes \Z[q].$$ The Schubert classes $\sigma_u$ with $u\in W^P$ form a $\Z[q]$-basis for $qH^*(X)$. The multiplication is defined as  \be \label{product}\sigma_u\star \sigma_v=\sum_d\sum_{w} c_{u,v}^{{w^\vee},d}\sigma_w,\ee
where the sums are taken over all $w\in W^P$ and degrees $d$ such that $l(u)+l(v)=l(w)+\int_Xc_1(X)\cdot d.$

 The quantum product of two general Schubert classes $\sigma_u$ and $ \sigma_v$ are far from completely understood. When either of them is in $H^2(X)$, then the so-called quantum Chevalley formula, due to Peterson \cite{Pert} and proved by Fulton and Woodward \cite{FW}, gives an explicit description of the coefficients in (\ref{product}).
\begin{proposition}[Quantum Chevalley formula]
For any $i\in I^P$ and $u\in W^P,$ the quantum product of $\sigma_{s_i}$ and $\sigma_u$ is given by
$$\sigma_{s_i}\star \sigma_u=\sum_\alpha h_{\alpha}(\omega_{\alpha_i})\sigma_v +\sum_\alpha q^{d(\alpha)}h_\alpha(\omega_{\alpha_i})\sigma_w,$$
 where the first sum is   over roots $\alpha \in R^+\setminus R^+_P$ for which $v=us_\alpha\in W^P$ satisfies  $l(v)=l(u)+1$, and the second sum is over roots $\alpha \in R^+\setminus R^+_P$ for which $w$ is the minimal length representative in $[us_{\alpha}]$ satisfying  $l(w)=l(u)+1-n_\alpha.$
\end{proposition}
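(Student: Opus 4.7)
The plan is to follow the Fulton--Woodward strategy and separate the formula into its classical ($q=0$) and quantum parts. Setting $q=0$ in the claimed identity recovers the classical Chevalley formula
$$\sigma_{s_i}\cup \sigma_u=\sum_{\alpha} h_\alpha(\omega_{\alpha_i})\sigma_v,$$
with the sum over those $\alpha\in R^+\setminus R^+_P$ for which $v=us_\alpha\in W^P$ satisfies $l(v)=l(u)+1$. I would establish this classical piece by invoking Monk's formula on $G/B$ (which computes the cup product of a degree--two Schubert class with an arbitrary Schubert class) and pushing down along the projection $\pi:G/B\to G/P$: Schubert classes on $G/P$ pull back to Schubert classes on $G/B$ indexed by minimal length coset representatives, and $\pi_*$ kills contributions whose indices do not lie in $W^P$. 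Reassembling yields precisely the claimed classical sum.

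For the quantum corrections, the first step is dimensional. A nonzero Gromov--Witten invariant $c_{s_i,u}^{w^\vee,d}$ with $d\neq 0$ forces $l(w)=l(u)+1-n_d$, where $n_d=\int_X c_1(X)\cdot d\geq 1$; thus in each positive degree $d$ only Schubert classes of length $l(u)+1-n_d$ can appear. It remains to determine which degrees actually contribute and to compute the corresponding coefficients.

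The main computation would proceed via the divisor axiom. Since $\sigma_{s_i}\in H^2(X)$, the three-point invariant factors as
$$c_{s_i,u}^{w^\vee,d}=\Bigl(\int_d \sigma_{s_i}\Bigr)\,\langle \sigma_u,\sigma_{w^\vee}\rangle_{0,2,d},$$
and the first factor equals $h_\alpha(\omega_{\alpha_i})$ when $d=d(\alpha)$, directly from the definition of $d(\alpha)$. It then suffices to show that the residual two-point invariant equals $1$ precisely when $d=d(\alpha)$ for some $\alpha\in R^+\setminus R^+_P$ and $w$ is the minimal-length representative of $[us_\alpha]$ with $l(w)=l(u)+1-n_\alpha$, and vanishes otherwise. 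Geometrically this counts rational curves of degree $d$ through generic translates of $Y(u)$ and $Y(w^\vee)$; for $d=d(\alpha)$ the relevant curves are the ``lines of type $\alpha$,'' i.e.\ the $T$-stable $\mathbb{P}^1$'s swept out by the root subgroup $U_{-\alpha}$, and Kleiman transversality gives the desired count.

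The main obstacle I foresee is ruling out contributions from curve classes $d$ not of the form $d(\alpha)$. Establishing the vanishing of these two-point invariants requires a detailed Bruhat-cell analysis of the moduli spaces $\overline{M}_{0,2}(G/P,d)$, showing that for such $d$ either the moduli space has too small a dimension to support the expected intersection or generic translates of $Y(u), Y(w^\vee)$ admit no joining curve of that degree. This vanishing is the technical heart of the Fulton--Woodward argument; with it in hand the rest reduces to the enumerative accounting on the line moduli spaces indexed by individual positive roots $\alpha\in R^+\setminus R^+_P$.
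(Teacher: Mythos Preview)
The paper does not provide its own proof of this proposition. It is stated as a known result, attributed to Peterson and to Fulton--Woodward (reference \cite{FW} in the paper), and is used as a black box throughout. So there is nothing in the paper for your proposal to be compared against; your sketch is an attempt to reconstruct the Fulton--Woodward argument rather than to reproduce anything the authors wrote.

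As to the sketch itself: the overall shape is reasonable, and you correctly identify that the hard part lies in controlling which degrees $d$ contribute. One caution: the divisor axiom reduction to a two-point invariant is formally valid for $d\neq 0$, but the Fulton--Woodward paper does not actually proceed this way. Their argument is more directly geometric---they analyze when a degree-$d$ rational curve can meet two opposite Schubert varieties, and reduce to the case of $T$-invariant curves (which are precisely the $\mathbb{P}^1$'s indexed by positive roots). The two-point invariant you write down is not a standard object in their treatment, and phrasing the argument through it would require you to set up $\overline{M}_{0,2}(X,d)$ carefully and check the relevant transversality, which is not obviously simpler than their original approach. If you intend to actually carry out a proof, it would be cleaner either to follow \cite{FW} directly or to cite the result as the present paper does.
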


\begin{remark}\label{remark-degree}
  Since $h_{\alpha_i}$ and $\omega_{\alpha_i}$ are dual bases for $i\in I$, by the very definition of $R^+_P,$ if $\alpha\in R_P^+, $  then $h_{\alpha}(\omega_{\alpha_i})=0$ for all $i\in I^P,$ and if $\alpha \in R^+\setminus R^+_P$, then there is an $i\in I^P$ such that $h_{\alpha}(\omega_{\alpha_i})\ne 0.$
\end{remark}

\begin{remark}
  We can also replace this parameterization set of the first sum by an   apparently larger one:  \textit {over roots $\alpha\in R^+$ for which the minimal length representative $v$ of $[us_\alpha]$ satisfies  $\ell(v)=\ell(u)+1$}. Indeed, the natural projection $G/B\to G/P$ induces an injective morphism $H^*(G/P)\hookrightarrow H^*(G/B)$ of algebras, sending a Schubert class $\sigma_u^P \,\,(u\in W^P)$ in $H^*(G/P)$ to the Schubert class $\sigma^B_u$ in $H^*(G/B)$ labeled by the same $u$. The new parameterization set of the first sum gives the Chevalley formula for $H^*(G/B)$. Due to   the injective morphism,   the coefficient $h_\alpha(\omega_{\alpha_i})$ is nonzero only if $v=us_\alpha$ itself belongs to $W^P$ and $\alpha\notin R_P^+$.
\end{remark}
\begin{corollary}
 For any $i\in I^P$, we have $2 \leq n_i \leq   \mathrm{dim}  X+1$.
\end{corollary}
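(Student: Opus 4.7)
The plan is to deduce both bounds from the Lie-theoretic formula for $n_i$. By definition $n_i = 2 h_{\alpha_i}(\rho_P)$; writing $2\rho = 2\rho_P + \sum_{\beta \in R^+_P} \beta$ and using the standard identity $h_{\alpha_j}(\rho)=1$ for every simple root $\alpha_j$ (because $\rho = \sum_{j} \omega_{\alpha_j}$ and $\{h_{\alpha_i}\}$, $\{\omega_{\alpha_j}\}$ are dual bases), this yields
\[
   n_i \;=\; 2 \;-\; \sum_{\beta \in R^+_P} h_{\alpha_i}(\beta).
\]

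For the lower bound, I would show that each summand on the right is non-positive when $i \in I^P$. Since $\beta \in R^+_P$ is a nonnegative integer combination of simple roots in $\triangle_P$ and $\alpha_i \notin \triangle_P$, we have $\beta \neq \alpha_i$, so $s_i(\beta) = \beta - h_{\alpha_i}(\beta)\alpha_i$ is a positive root (as $s_i$ permutes $R^+ \setminus \{\alpha_i\}$); hence its $\alpha_i$-coefficient $-h_{\alpha_i}(\beta)$ is nonnegative. This immediately gives $n_i \geq 2$.

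For the upper bound, I would reinterpret each term $-h_{\alpha_i}(\beta)$ via the $\alpha_i$-root string through $\beta$. Because $\beta - \alpha_i$ has $\alpha_i$-coefficient $-1$ but nonnegative $\triangle_P$-coefficients (not all zero), it cannot be a root; thus the $\alpha_i$-string through $\beta$ has no downward component, and $-h_{\alpha_i}(\beta) = q_\beta$, where $q_\beta \geq 0$ is the largest integer with $\beta + q_\beta \alpha_i \in R^+$. The map
\[
   (\beta, k) \;\longmapsto\; \gamma := \beta + k\alpha_i
\]
on $\{(\beta, k) : \beta \in R^+_P,\ 1 \leq k \leq q_\beta\}$ is a bijection onto
\[
  \mathcal{A} \;:=\; \{\gamma \in R^+ \setminus R^+_P : \gamma - c_i(\gamma)\alpha_i \in R^+_P\},
\]
where $c_i(\gamma)$ denotes the $\alpha_i$-coefficient of $\gamma$ and the inverse is $\gamma \mapsto (\gamma - c_i(\gamma)\alpha_i,\, c_i(\gamma))$. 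Hence $n_i - 2 = |\mathcal{A}|$, and the crucial observation that $\alpha_i \notin \mathcal{A}$ (since $\alpha_i - \alpha_i = 0 \notin R^+_P$) gives $|\mathcal{A}| \leq |R^+ \setminus R^+_P| - 1 = \dim X - 1$, so $n_i \leq \dim X + 1$.

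The main obstacle is the bookkeeping for the upper bound: identifying $-h_{\alpha_i}(\beta)$ with the upward $\alpha_i$-string length, and then extracting the sharp constant $+1$ (rather than the crude $+2$) by means of the single but essential observation that $\alpha_i$ is absent from the image of the bijection.
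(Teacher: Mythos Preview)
Your proof is correct, and it takes a genuinely different route from the paper's argument.

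The paper handles the lower bound in one line (``it follows from the definition of $n_i$''), which amounts to your observation that $h_{\alpha_i}(\beta)\le 0$ for $\beta\in R_P^+$. For the upper bound, however, the paper argues via quantum cohomology: since $\ell(w_0^P)=\dim X$, the classical cup product $\sigma_{s_i}\cup\sigma_{w_0^P}$ vanishes, but by the Fulton--Woodward nonvanishing theorem the quantum product $\sigma_{s_i}\star\sigma_{w_0^P}$ does not; hence by the quantum Chevalley formula some $q^{d(\alpha)}\sigma_w$ with $h_\alpha(\omega_{\alpha_i})>0$ appears, and comparing degrees gives $n_i\le \deg q^{d(\alpha)}+\ell(w)=1+\dim X$.

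Your argument, by contrast, is purely root-theoretic: you rewrite $n_i-2=\sum_{\beta\in R_P^+}(-h_{\alpha_i}(\beta))$, identify each summand with the length $q_\beta$ of the upward $\alpha_i$-string through $\beta$, and then count via the bijection $(\beta,k)\mapsto\beta+k\alpha_i$ onto $\mathcal A\subseteq (R^+\setminus R_P^+)\setminus\{\alpha_i\}$. This avoids any appeal to Gromov--Witten theory and in particular does not invoke the nontrivial nonvanishing result of \cite{FW}, so it is more elementary and entirely self-contained within root combinatorics. The paper's approach, on the other hand, is shorter once one is willing to use \cite[Theorem 9.1]{FW}, and it foreshadows the way quantum Chevalley and nonvanishing are deployed throughout the rest of the argument (e.g.\ in the irreducibility proof and the construction of cycles).
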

\begin{proof}
   It follows from the definition of $n_i$ that $n_i\geq 2$. Since $\ell(w_0^P)=\dim G/P$, the cup product    $\sigma_{s_i}\cup \sigma_{w_0^P}=0$ vanishes.  On the other hand,    the quantum product  of Schubert classes never vanishes (following from \cite[Theorem 9.1]{FW}). Therefore by the quantum Chevalley formula, the expansion of $\sigma_{s_i}\star \sigma_{w_0^P}$ contains a class $q^{d(\alpha)}\sigma_{w}$ for some $\alpha\in R^+\setminus R_P^+$ which has a positive coefficient at the simple root $\alpha_i$. It follows that
      $n_i=\deg (q_i)\leq \deg (q^{d(\alpha)})+\ell(w)=\ell(s_i)+\ell(w_0^P)=1+\dim X$.
\end{proof}
 The   quantum cohomology ring of $X$ can also be defined without using the quantum variables $q_i$  ($i\in I^P$), which is  denoted as  $H^\star(X,\C)$.
 In our language, the ring  $H^\star(X,\C)$ is identified with
the specialization of $qH^\star(X,\C)$ at $q_i=1$ for all $i$, i.e., $$H^\star(X,\C)=qH^\star(X,\C)/<q_i-1\hspace{0.04in}|\hspace{0.04in}i\in I^P>.$$ Note that  $H^\star(X,\C)$ is a finite dimensional vector space over $\C$, while $qH^\star(X,\C)$ is not over $\C$, but over $\C[q].$

 \section{Nonnegative matrices}
In this section, we review Perron-Frobenius theory on nonnegative matrices and some related results which will be used later. Details on these materials can be found in \cite{Minc} and \cite{BP}.
\subsection{Irreducible matrices}
\begin{definition}
A nonnegative matrix $M$ is said to be  $cogredient$ to a matrix $M^\prime$ if there is a permutation matrix $P$ such that $M=P^TM^\prime P$.

A nonnegative matrix $M$ is called  $reducible$ if it is cogredient to a matrix in the form
\be \label{Mat}
M^\prime=\left[\begin{array}{cc}
A&B\\
0&D
\end{array}\right],
\ee  where $A,D$ are square submatrices.
 If it is not reducible, then $M$ is called $irreducible$.
\end{definition}
\begin{remark}\label{remark-basis}
\bn
\item
 Note that if $V$ is a vector space with an ordered basis $\mathcal{B}=\{v_1,...,v_m\}$ and $T$ is an operator on $V,$ then the matrix $[T]_{\mathcal{B}}$ of $T$ with respect to the basis $\mathcal{B}$ is reducible if and only if there is a nontrivial proper coordinate subspace invariant under $T$, equivalently there is an ordered basis $\mathcal{B}^\prime$ with respect to which $[T]_{\mathcal{B}^\prime}$ is in the form (\ref{Mat}), where $\mathcal{B}^\prime$ is obtained from $\mathcal{B}$ by reordering elements of $\mathcal{B}.$
\item
 Suppose $[T]_\mathcal{B}$ is reducible. Let $V_0$ denote a  nontrivial proper coordinate subspace of $V$ invariant under $T$. We point out that  if $V_0$ contains a basis element $v_i\in \mathcal{B}$, then $V_0$ contains all basis elements $v_j\in \mathcal{B}$ such that the coefficient $b_{ji}$ of $v_j$ is nonzero in $T(v_i)=\sum_{k=1}^m b_{ki} v_k.$ More generally, suppose $T=\sum_{i=1}^lc_iT_i$ for some positive numbers $c_i$ and operators $T_i$ with $[T_i]_\mathcal{B}$ nonnegative. Then the coefficient $b_{ji}$ is nonzero if and only if there exists a $1\leq p \leq l$  such that the coefficient $b_{ji}^p$ of $v_j$ is nonzero in  $T_p(v_i)=\sum_{k=1}^m b_{ki}^p v_k$.
\en

\end{remark}

The next two propositions are due to Perron \cite{Perr} and Frobenius \cite{Frob}.
\begin{proposition}[See e.g.  Theorem 1.4 of Chapter 2 of \cite{BP}]\label{Perron-Frob} Let $M$ be an irreduclble matrix. Then
$M$ has a real positive eigenvalue $\delta_0$ of multiplicity one such that $$\delta_0\geq |\delta|$$ for any eigenvalue $\delta$ of $M.$ Furthermore, $M$ has   a  positive eigenvector corresponding to $\delta_0.$
\end{proposition}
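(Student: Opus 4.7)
The plan is to follow the classical route via the Collatz--Wielandt characterization of the spectral radius, with the key preparatory fact that $(I+M)^{n-1}$ is entrywise strictly positive whenever $M$ is irreducible.

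First I would establish that combinatorial lemma: for an $n\times n$ irreducible nonnegative matrix $M$, every entry of $(I+M)^{n-1}$ is strictly positive. The standard argument is graph-theoretic. Attach to $M$ the directed graph $D(M)$ on $\{1,\dots,n\}$ with an edge $i\to j$ whenever $M_{ji}>0$; irreducibility (as implicit in Remark \ref{remark-basis}) amounts to strong connectivity of $D(M)$. Since $(M^k)_{ji}$ is a nonnegative sum over weighted directed walks of length $k$ from $i$ to $j$, the binomial expansion of $(I+M)^{n-1}$ collects walks of all lengths $0\leq k\leq n-1$, and strong connectivity guarantees at least one such walk for every ordered pair $(i,j)$.

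Next I would produce the eigenvalue via the Collatz--Wielandt functional
\[
r(x)\;=\;\min\{(Mx)_i/x_i : x_i>0\},\qquad x\geq 0,\; x\neq 0,
\]
and define $\delta_0:=\sup_{x\in\Delta}r(x)$, with $\Delta$ the compact unit simplex in the nonnegative orthant. The supremum is attained: replacing $x$ by $y:=(I+M)^{n-1}x$ yields $y>0$ with $r(y)\geq r(x)$, because $My-r(x)y=(I+M)^{n-1}(Mx-r(x)x)\geq 0$ entrywise. Hence the supremum may be computed over the strictly positive part of $\Delta$, on which $r$ is continuous, and compactness provides a maximizer $x^\ast>0$. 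To see $Mx^\ast=\delta_0 x^\ast$, note that if $Mx^\ast-\delta_0 x^\ast$ were nonzero and nonnegative, then multiplication by $(I+M)^{n-1}$ would give $My>\delta_0 y$ strictly in every coordinate, contradicting the maximality of $\delta_0$. Positivity of $\delta_0$ is immediate from irreducibility. For the spectral radius property, let $Mv=\delta v$ over $\mathbb{C}$; componentwise $M|v|\geq|\delta|\,|v|$, so $r(|v|)\geq|\delta|$ and hence $\delta_0\geq|\delta|$.

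Finally, I would address simplicity. For geometric multiplicity one: any real eigenvector $x$ for $\delta_0$ satisfies $(I+M)^{n-1}x=(1+\delta_0)^{n-1}x$, and the strict positivity of $(I+M)^{n-1}$ forces $x$ to have coordinates all of the same nonzero sign; two strictly positive eigenvectors must then be proportional, for otherwise some nontrivial linear combination would vanish at a coordinate while still being a Perron vector. For algebraic multiplicity one, apply the same construction to the irreducible matrix $M^T$ to obtain a strictly positive left eigenvector $y$ for $\delta_0$; if a generalized eigenvector $w$ satisfied $(M-\delta_0 I)w=x^\ast$, then $0=y^T(M-\delta_0 I)w=y^Tx^\ast>0$, a contradiction. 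I expect the principal obstacle to be this last step: the Collatz--Wielandt framework delivers existence, maximality, and geometric simplicity almost for free, whereas algebraic simplicity requires the additional input of the left Perron vector and a Jordan-structure pairing argument that must be set up with some care.
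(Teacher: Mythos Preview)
Your proposal is a clean rendering of the classical Collatz--Wielandt proof of the Perron--Frobenius theorem, and the argument is essentially correct. One small point of phrasing: when you write ``the supremum may be computed over the strictly positive part of $\Delta$, on which $r$ is continuous, and compactness provides a maximizer,'' note that the strictly positive part of $\Delta$ is not itself compact. What you really want is the image $(I+M)^{n-1}\Delta$ (suitably normalized), which \emph{is} compact, consists of strictly positive vectors with coordinates uniformly bounded away from zero, and on which $r$ is continuous; your monotonicity step shows the supremum over $\Delta$ agrees with the supremum there. With that tweak the existence argument goes through, and the remaining steps (eigenvector property of the maximizer, the spectral-radius bound via $M|v|\geq|\delta|\,|v|$, geometric simplicity via sign analysis, and algebraic simplicity via the left Perron vector pairing) are all standard and correctly sketched.

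As for comparison with the paper: there is nothing to compare. The paper does not prove this proposition at all; it is stated as a classical result with a citation to Theorem~1.4 of Chapter~2 of \cite{BP} and used as a black box. Your write-up therefore goes well beyond what the paper supplies, and supplying a self-contained proof of Perron--Frobenius is not expected in this context.
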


\begin{definition} For an irreducible matrix $M,$ we define the $index$  $of$ $imprimitivity$ of $M$, denoted as $h(M)$, to be the number of eigenvalues of maximal modulus. If $h(M)=1$, then $M$ is said to be $primitive$; otherwise, it is $imprimitive$.
\end{definition}

If $M$ is an irreducible matrix, then eigenvalues of the same modulus are completely determined by one of them.
\begin{proposition}[See e.g. Theorem 2.20 of Chapter 2 of \cite{BP}]\label{prop-angle}
Let $M$ be an irreducible matrix with $h(M)=h.$ Then the eigenvalues of $M$ of modulus $\delta_0$ are  all of multiplicity one,   given by the distinct roots of $\lambda^h-\delta_0^h=0$.
 {Moreover, the set of eigenvalues of $M$ is invariant under rotation by $\frac{2\pi}{h}$}.
\end{proposition}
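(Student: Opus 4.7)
The plan is to deduce both assertions of the proposition from a single structural identity: for every peripheral eigenvalue $\lambda=\delta_0 e^{i\phi}$ there exists a diagonal unitary $D$ satisfying $D^{-1}MD=e^{i\phi}M$. Once such a similarity is in hand, the spectrum of $M$ (with multiplicities) is automatically invariant under multiplication by $e^{i\phi}$, each peripheral eigenvalue is conjugated to the simple Perron eigenvalue $\delta_0$ of Proposition~\ref{Perron-Frob} and therefore inherits its simplicity, and the set of admissible phases $\phi$ forms a finite subgroup of $\R/2\pi\Z$, necessarily cyclic of order $h=h(M)$ and generated by $2\pi/h$.

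The first step is to show that any eigenvector $y$ for a peripheral eigenvalue $\lambda$ has all entries nonzero and that $|y|$, the vector of componentwise absolute values, is a Perron eigenvector for $\delta_0$. I would apply the triangle inequality row by row to $My=\lambda y$, obtaining $M|y|\geq \delta_0 |y|$ componentwise. Pairing this inequality with the positive left Perron eigenvector of $M$ (which exists by applying Proposition~\ref{Perron-Frob} to $M^{T}$, which is also irreducible) forces equality $M|y|=\delta_0|y|$. The simplicity part of Proposition~\ref{Perron-Frob} then says $|y|$ is a positive multiple of the Perron eigenvector $x$, so in particular all entries of $y$ are nonzero.

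The second step, which is the technical heart of the argument, is to unpack the equality case of the same triangle inequality. Writing $y_k=|y_k|e^{i\theta_k}$, equality in the $k$-th row forces all nonzero summands $m_{kj}y_j$ to share a common argument, which one reads off from $\lambda y_k$ to be $\phi+\theta_k$. Equivalently, $e^{i\theta_j}=e^{i\phi}e^{i\theta_k}$ whenever $m_{kj}>0$. Setting $D=\mathrm{diag}(e^{i\theta_1},\ldots,e^{i\theta_m})$, this is precisely the conjugation identity $D^{-1}MD=e^{i\phi}M$.

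The last step is soft. Similarity of $M$ with $e^{i\phi}M$ makes the spectrum of $M$ invariant under multiplication by $e^{i\phi}$ and sends the simple eigenvalue $\delta_0$ to $\delta_0 e^{i\phi}$ with unchanged algebraic multiplicity, yielding simplicity of the latter. The set $H=\{\phi\in\R/2\pi\Z:\delta_0 e^{i\phi}\in\mathrm{spec}(M)\}$ is therefore closed under addition and finite, hence a cyclic subgroup of order exactly $h(M)=h$. This gives the description of the peripheral eigenvalues as the distinct roots of $\lambda^h-\delta_0^h=0$, and invariance of the full spectrum under rotation by $2\pi/h$ is the similarity identity for $\phi=2\pi/h$. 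The only real obstacle is the equality-case analysis in step two that produces the clean identity $D^{-1}MD=e^{i\phi}M$; everything downstream is formal.
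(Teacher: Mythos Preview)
The paper does not supply its own proof of this proposition: it is quoted as a known result from the Perron--Frobenius literature, with a reference to \cite{BP}. So there is nothing in the paper to compare your argument against directly.

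That said, your argument is correct and is precisely the classical Wielandt proof that one finds in the cited reference. The key step~--- extracting from the equality case of the triangle inequality a diagonal unitary $D$ with $D^{-1}MD=e^{i\phi}M$ --- is exactly the standard mechanism, and your downstream deductions (simplicity of every peripheral eigenvalue by similarity to $\delta_0$, the group structure on the phases forcing them to be the $h$-th roots of unity, and rotational invariance of the full spectrum) are all sound. One small clarification worth making explicit when you write it up: to conclude that the phase set $H$ is closed under addition you use that invariance of the spectrum under $e^{i\phi_1}$ carries the peripheral eigenvalue $\delta_0 e^{i\phi_2}$ to $\delta_0 e^{i(\phi_1+\phi_2)}$, which is again peripheral; you have this implicitly but it deserves one sentence.
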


\begin{definition}
A matrix in the form
\be \label{superblock}
\left[\begin{array}{cccccc}
0&A_{12}&0&\cdots&0&0\\
0&0&A_{2,3}&\cdots&0&0\\
\vd&&&\dd&0&\vd\\
0&0&&\cdots&0&A_{k-1,k}\\
A_{k1}&0&&\cdots&&0
\end{array}\right]
\ee
is said to be $in$ $the$ $superdiagonal$ $(m_1,m_2,...,m_k)$-$block$ $form$ if the block $A_{i,i+1}$ is a $(m_i\times m_{i+1})$ matrix for  $i=1,...,k-1$, and $A_{k,1}$ is a $(m_k\times m_1)$ matrix.
\end{definition}
If an irreducible matrix $M$ is cogredient to a matrix $M^\prime$ in the form (\ref{superblock}), much spectral information of $M$ can be read off from $M^\prime$.
Among them, first comes the index of imprimitivity.
\begin{proposition}[\protect\cite{Minc00}; see e.g. Theorem 4.1 of Chapter 3 of \cite{Minc}]\label{Pro divide}
Let $M$ be an irreducible matrix with $h(M)=h$. Then
$M$ is cogredient to a matrix in the form (\ref{superblock}) such that all the $k$ blocks $A_{1,2},\cdots, A_{k-1, k}, A_{k,1}$ are nonzero if and only if $k$ divides $h.$

\end{proposition}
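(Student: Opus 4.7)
The plan is to translate the block-structure condition into a statement about the associated directed graph $G(M)$ and to exploit the classical fact that for an irreducible $M$, the index of imprimitivity $h=h(M)$ equals the greatest common divisor of the lengths of closed walks in $G(M)$.

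Let $G(M)$ denote the digraph with vertex set $\{1,\dots,m\}$ and a directed edge $i\to j$ whenever $M_{ji}\neq 0$. Irreducibility of $M$ is equivalent to strong connectedness of $G(M)$, and conjugation by a permutation matrix is precisely a relabeling of vertices, so the block-structure hypothesis is intrinsic to $G(M)$. I will take as a black box from Perron-Frobenius theory (see \cite{Minc}) the identity $h=\gcd\{\ell\mid G(M)\text{ has a closed walk of length }\ell\}$.

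For the \emph{only if} direction, suppose $M$ is cogredient to a matrix in superdiagonal $(m_1,\dots,m_k)$-block form with every $A_{i,i+1}$ nonzero. After relabeling, the vertices partition as $V_1\sqcup\cdots\sqcup V_k$ with all edges going $V_i\to V_{i+1\bmod k}$. Every closed walk in $G(M)$ therefore has length a multiple of $k$, and so $k$ divides the gcd of closed-walk lengths, which is $h$.

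For the \emph{if} direction, I will first produce the $h$-block Frobenius form directly and then regroup. Fix a base vertex $v_0$ and define $\nu(v)\in\Z/h\Z$ to be the length modulo $h$ of any walk from $v_0$ to $v$. This is well-defined: if two such walks have lengths $\ell,\ell'$, concatenating each with a fixed return walk from $v$ to $v_0$ (existing by strong connectedness) yields closed walks whose lengths are both divisible by $h$, forcing $\ell\equiv\ell'\pmod{h}$. Since every edge increases $\nu$ by $1$ modulo $h$, setting $C_j=\nu^{-1}(j)$ for $j=0,\dots,h-1$ yields the superdiagonal $h$-block form. Each $C_j$ is nonempty and each of the $h$ blocks is nonzero: if some $C_j$ were empty, or some block $C_i\to C_{i+1}$ were zero, then $C_{j-1}$ (respectively $C_i$) would have no outgoing edges, and a short reverse induction starting from the fact that $v_0\in C_0$ contradicts strong connectedness. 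Finally, given $k\mid h$, set $D_j:=\bigcup_{i\equiv j\pmod{k}}C_i$ for $j=0,\dots,k-1$; edges then go $D_j\to D_{j+1\bmod k}$, and each new block $A_{j,j+1}$ contains at least one of the original nonzero $h$-blocks, hence is nonzero.

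The main technical obstacle is justifying the identity $h=\gcd\{\text{closed-walk lengths}\}$, which is a standard consequence of Perron-Frobenius theory that one would quote from \cite{Minc} rather than reprove; once this is in hand, the argument is a routine digraph manipulation. A secondary point to verify carefully is well-definedness of the residue $\nu(v)$, but it reduces to the same gcd characterization applied to concatenated closed walks, as sketched above.
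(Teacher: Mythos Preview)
The paper does not supply its own proof of this proposition; it is quoted from Minc \cite{Minc00,Minc} as a known result in Perron--Frobenius theory. So there is nothing in the paper to compare your argument against line by line.

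That said, your proof is correct and is essentially the standard argument one finds in references such as \cite{Minc}. The ``only if'' direction is immediate from the block structure, and your ``if'' direction---defining the residue function $\nu$ via path lengths modulo $h$, checking well-definedness using the gcd characterization of $h$, and then coarsening the $h$-block partition into a $k$-block one when $k\mid h$---is exactly the classical construction. Two small remarks: first, the black-box identity $h(M)=\gcd\{\text{closed-walk lengths}\}$ that you invoke is precisely Proposition~\ref{equal}(2) in the paper (the paper speaks of ``cycles'' rather than ``closed walks,'' but the two gcds coincide since every closed walk decomposes into cycles). Second, your reverse-induction argument for nonemptiness of each $C_j$ is slightly compressed; the clean statement is that if $C_j=\emptyset$ and $C_{j-1}\neq\emptyset$ then vertices in $C_{j-1}$ have no outgoing edges, contradicting strong connectedness, so emptiness propagates backward to $C_0\ni v_0$.
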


\subsection{Directed graphs}
When we deal with spectral properties of nonnegative matrices, mostly we are only interested in the zero pattern of their entries.  One of ways of encoding this pattern is through the so-called directed graph. We list a multiple of definitions related with  directed graphs.
\begin{definition}
\bn
\item
A $directed$ $graph$ $D$ consists of data $(\textrm{Ver},\textrm{Arc})$, where $\textrm{Ver}$ is a set and $\textrm{Arc}$ is a binary relation on $\textrm{Ver}$, i.e., a subset of $\textrm{Ver}\times \textrm{Ver}.$ Elements of $\textrm{Ver}$ are called $vertices$ and elements of $\textrm{Arc}$ are called $arcs$. For convenience, we assume that  $\mathrm{Ver}=\{v_1,...,v_m\}.$
\item
A sequence of arcs $(v_{i_1},v_{i_2}),(v_{i_2},v_{i_3}),(v_{i_3},v_{i_4}),...,(v_{i_{k-1}},v_{i_k})$ in $D$ is called a $path$ $from$  $v_{i_1}$ $to$ $v_{i_k}$ which we will denote by $\mathrm{PATH}(v_{i_1}:v_{i_k})$, or simply $\mathrm{PATH}(i_1,i_l)$.
The $length$ of a path is the number of arcs in the sequence. A path of length $k$ from a vertex to itself is called a $cycle$ of length $k.$
\item
The $adjacency$ $matrix$ \footnote{Our definition of the adjacency matrix may be slightly different from ones in some literature.  The one in \cite{Minc} (p.$77$ ) is the transpose of ours. Our definition is a bit more intuitive in our situation.} of $D$, denoted $A=A(D)=(a_{i,j}),$ is an $m\times m$ square matrix with entries $0$ or $1$ defined by
\begin{displaymath} a_{i,j}= \left\{\begin{array}{cc}
1& \mathrm{if}\hspace{0.07in} (v_j,v_i)\in \mathrm{Arc}, \\
0,& \mathrm{otherwise.}
\end{array}\right.
\end{displaymath}

\item
A directed graph $D$ is said to be $associated$ with a  nonnegative matrix $M$ if the adjacency matrix $A(D)$ has the same zero pattern as $M.$
\item
A directed graph is $strongly$ $connected$ if for any ordered pair $(v_i,v_j)$ with $i\ne j,$
there is a path from $v_i$ to $v_j$
\item
Let $D$ be a strongly connected directed graph. The $index$ $of$ $imprimitivity$ of $D$, denoted as $h(D)$, is defined to be the g.c.d of  lengths of all cycles in $D$
\en
\end{definition}

\begin{remark}\label{remark-graph} A directed graph $D$ can be visualized by a diagram in which an arc $(v_i,v_j)$ is represented by a directed line going from $v_i$ to $v_j.$
The diagram associated to $D$ is also referred to as a directed graph.
 \end{remark}
 \begin{fact}\label{fact-graph}
   Let $V$ be a vector space with an ordered basis $\mathcal{B}=\{v_1,...,v_m\}$, and, for $i=1,...,l$, let $T_i$ be an operator on $V$ with $[T_i]_\mathcal{B}$ nonnegative. Given positive real numbers $c_1, ...,c_l$, we let $T:=\sum_{i=1}^lc_iT_i$, and   can associate to $T$ a directed graph $$D(T:\mathcal{B})=(\mathrm{Ver}(T:\mathcal{B}), \mathrm{Arc}(T:\mathcal{B})).$$
Here  $\mathrm{Ver}(T:\mathcal{B})=\{v_1,...,v_m\},$ and we define a relation $\mathrm{Arc}(T:\mathcal{B})$ on $\mathrm{Ver}(T:\mathcal{B})$ by $$(v_i,v_j)\in \mathrm{Arc}(T:\mathcal{B})\Leftrightarrow b_{ji}\ne 0 \hspace{0.07in} \mathrm{in} \hspace{0.07in} T(v_i)=\sum_{k=1}^m {b_{ki}}v_k,$$

$$\Leftrightarrow \exists \hspace{0.07in} p, 1\leq p\leq l, \hspace{0.04in} \mathrm{such\hspace{0.04in} that} \hspace{0.07in}b_{ji}^p\ne 0 \hspace{0.07in} \mathrm{in} \hspace{0.07in} T_p(v_i)=\sum_{k=1}^m {b_{ki}^p}v_k.$$
\noindent The matrix  $[T]_{\mathcal{B}}$ has the same zero pattern as the adjacency matrix $A(D(T:\mathcal{B}))$, and hence the directed graph $D(T:\mathcal{B})$ is associated with the matrix $[T]_{\mathcal{B}}$.  In particular,  to any nonnegative matrix $M,$ we can associate the directed graph $D(M):=D(M,\mathcal{B})$ by taking $\mathcal{B}$ to be the standard basis.
 \end{fact}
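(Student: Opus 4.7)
The plan is to verify the fact by unwinding definitions: I would first check the second $\Leftrightarrow$ appearing in the definition of $\mathrm{Arc}(T:\mathcal{B})$, and then confirm that $A(D(T:\mathcal{B}))$ and $[T]_{\mathcal{B}}$ share a common zero pattern. The final sentence about $D(M)$ will be an immediate special case.

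For the equivalence, the key observation is that if one writes $T(v_i) = \sum_k b_{ki} v_k$ and $T_p(v_i) = \sum_k b_{ki}^p v_k$, then since $T = \sum_{p=1}^{l} c_p T_p$ by definition, one has $b_{ji} = \sum_{p=1}^{l} c_p b_{ji}^p$. The hypotheses supply $c_p > 0$ for each $p$ and $b_{ji}^p \geq 0$ for each $p$ (because $[T_p]_{\mathcal{B}}$ is nonnegative), so this is a nonnegative linear combination with strictly positive weights. Such a sum vanishes if and only if every summand vanishes, and since $c_p \neq 0$ this happens if and only if $b_{ji}^p = 0$ for every $p$. Taking contrapositives gives $b_{ji} \neq 0$ iff there exists $p \in \{1, \ldots, l\}$ with $b_{ji}^p \neq 0$, which is the claimed equivalence.

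For the zero-pattern assertion, I would just read off the definitions: the $(j,i)$-entry of $A(D(T:\mathcal{B}))$ is $1$ when $(v_i, v_j) \in \mathrm{Arc}(T:\mathcal{B})$ and $0$ otherwise, and by construction of the arc relation this happens exactly when $b_{ji} \neq 0$, i.e.\ exactly when the $(j,i)$-entry of $[T]_{\mathcal{B}}$ is nonzero. The concluding statement, in which one attaches $D(M)$ to a nonnegative matrix $M$, is recovered by taking $l = 1$, $c_1 = 1$, and $T_1$ to be the operator whose matrix in the standard basis is $M$. Since the whole fact is a bookkeeping translation between operators, matrices, and graphs, there is no substantive obstacle; the only point requiring care is to respect the transposition convention built into the definition of the adjacency matrix, namely that an arc $(v_i, v_j)$ contributes to the $(j,i)$-entry rather than the $(i,j)$-entry.
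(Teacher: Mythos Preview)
Your proposal is correct and is exactly the straightforward verification one would carry out; the paper in fact states this as a ``Fact'' without supplying any proof, leaving the bookkeeping you have spelled out to the reader. Your care about the transposition convention in the adjacency matrix is well placed and matches the paper's conventions.
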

The next proposition compares the properties of nonnegative matrices and their assosiated directed graphs; see e.g. Theorems 3.2 and 3.3 of Chapter 4 of \cite{Minc}.

\begin{proposition}\label{equal} Let $M$ be a nonnegative matrix.
\bn
\item
$M$ is irreducible if and only if the associated directed graph $D(M)$ is strongly connected.
\item If $M$ is irreducible, then
the index $h(M)$ of imprimitivity of $M$ is equal to the index $h(D(M))$ of imprimitivity of the associated directed graph $D(M)$.
\en
\end{proposition}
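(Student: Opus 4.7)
The plan is to handle the two parts of Proposition~\ref{equal} in sequence, with part~(2) reducing via Proposition~\ref{Pro divide} to a purely graph-theoretic lemma whose proof is the core of the argument.

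For part~(1), I will argue both directions by directly translating between the zero pattern of $M$ and reachability in $D(M)$. Suppose $D(M)$ is not strongly connected, and fix vertices $v_i, v_j$ with no path $v_i \to v_j$. Let $S$ be the set of all vertices reachable from $v_i$ together with $v_i$ itself; then $S$ is a nonempty proper subset of $\mathrm{Ver}$, and by construction every arc leaving a vertex of $S$ lands in $S$. After reordering the vertices so that $S$ occupies positions $1,\ldots,|S|$, the entries $a_{k,\ell}$ with $v_\ell\in S$ and $v_k\notin S$ all vanish (since $a_{k,\ell}\neq 0$ would mean $(v_\ell,v_k)\in\mathrm{Arc}$, forcing $v_k\in S$). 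Thus $A(D(M))$, and hence $M$, has the reducible block form~(\ref{Mat}). Conversely, if $M$ is reducible then the associated permutation of vertices partitions $\mathrm{Ver}$ into a set whose outgoing arcs never cross over to its complement, which precludes strong connectivity.

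For part~(2), the key observation is that cogredience of $M$ to the superdiagonal $(m_1,\ldots,m_k)$-block form~(\ref{superblock}) with all named blocks nonzero is the same as a partition of $\mathrm{Ver}(D(M))$ into nonempty classes $V_1,\ldots,V_k$ such that every arc of $D(M)$ goes from some $V_i$ to $V_{i+1}$ (indices mod $k$); call this a \emph{cyclic $k$-partition}. Proposition~\ref{Pro divide} then identifies the set of $k$ admitting a cyclic $k$-partition with the set of divisors of $h(M)$, and in particular $h(M)=\max\{k:D(M)\text{ admits a cyclic }k\text{-partition}\}$. It therefore suffices to establish the graph-theoretic identity
\[
\max\{k:D(M)\text{ admits a cyclic }k\text{-partition}\} \;=\; \gcd\{\text{lengths of cycles in }D(M)\} \;=\; h(D(M)).
\]

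The main obstacle is this last identity. One direction is immediate: in any cyclic $k$-partition, each cycle must return to its starting class, so its length is divisible by $k$, whence $k$ divides $h(D(M))$. The substantive direction constructs a cyclic $h$-partition with $h:=h(D(M))$. Fix a base vertex $v_0$ and, invoking strong connectivity from part~(1), define $\varphi(v):=\ell \bmod h$ where $\ell$ is the length of any directed path from $v_0$ to $v$. Well-definedness is the crux: if $\ell_1,\ell_2$ are lengths of two such paths, pick any path of length $m$ from $v$ back to $v_0$; concatenation yields cycles at $v_0$ of lengths $\ell_1+m$ and $\ell_2+m$, both divisible by $h$, so $\ell_1\equiv\ell_2\pmod{h}$. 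Setting $V_j:=\varphi^{-1}(j)$ gives a cyclic $h$-partition, since for every arc $(v,v')$ with $\varphi(v)=j$, the concatenated path $v_0\to v\to v'$ has length $\equiv j+1\pmod h$, so $\varphi(v')=j+1$. Chaining the resulting identities produces $h(M)=h(D(M))$, as required.
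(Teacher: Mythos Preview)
The paper does not prove this proposition at all; it is stated with a citation to Theorems~3.2 and~3.3 of Chapter~4 of Minc's \emph{Nonnegative Matrices} and then used as a black box. Your argument is a correct self-contained proof and thus goes beyond what the paper provides. The reduction of part~(2) via Proposition~\ref{Pro divide} to the graph-theoretic identity $\max\{k:\text{cyclic }k\text{-partition}\}=h(D(M))$ is clean; the only point you leave implicit is that each class $V_j=\varphi^{-1}(j)$ is nonempty (and hence the corresponding off-diagonal block is nonzero), but this follows at once from strong connectivity: any cycle through $v_0$ has length divisible by $h$ and visits vertices in every residue class mod~$h$.
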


\section{Main result}
The quantum cohomology ring $H^\star(X,\C)$ is a finite dimensional complex vector space with the Schubert basis $\mathcal{S}$ consisting of Schubert classes $\sigma_u$ for $u\in W^P$. Arrange  elements of $\mathcal{S}$ linearly once and for all to make $\mathcal{S}$ into an ordered basis. We will denote this ordered basis by $\mathcal{S},$ too.

The next lemma is a known fact, and can be found  for instance in \cite[Lemma 2.7]{LL01}.
\begin{lemma}\label{reducedword}
   Let $w\in W^P$, and take any reduced decomposition $w=s_{i_1}s_{i_2}\cdots s_{i_l}$ where $\ell=\ell(w)$.  Then we have $v:=s_{i_2}\cdots s_{i_l}\in W^P$ and $v^{-1}(\alpha_{i_1})\in R^+\setminus R_P^+$.
\end{lemma}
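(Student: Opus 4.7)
The plan is to prove the two assertions in sequence, each by a short argument leveraging the characterization $W^P=\{u\in W\mid \ell(us_j)=\ell(u)+1 \text{ for all } j\in I_P\}$ and the standard exchange property for simple reflections.

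First I would show $v\in W^P$ by contradiction. Suppose $v\notin W^P$; then there exists $j\in I_P$ with $\ell(vs_j)\le \ell(v)-1=l-2$. Writing $ws_j=s_{i_1}(vs_j)$ and applying subadditivity of length gives $\ell(ws_j)\le 1+(l-2)=l-1<l=\ell(w)$. But $w\in W^P$ and $j\in I_P$ force $\ell(ws_j)=\ell(w)+1$, a contradiction. Hence $v\in W^P$.

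Next I would verify $v^{-1}(\alpha_{i_1})\in R^+$. Since $s_{i_1}v=w$ with $\ell(s_{i_1}v)=\ell(v)+1$, the standard criterion (for a simple reflection, $\ell(s_iu)>\ell(u)$ if and only if $u^{-1}(\alpha_i)\in R^+$) applied with $u=v$ and $i=i_1$ immediately gives $v^{-1}(\alpha_{i_1})\in R^+$. This step is routine.

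The main point, and the step I expect to be the subtlest, is ruling out $\beta:=v^{-1}(\alpha_{i_1})\in R_P^+$. Here my plan is: suppose for contradiction $\beta\in R_P^+$. Then $s_\beta\in W_P$ (since $R_P$ is the root subsystem generated by $\Delta_P$, and the reflection in any such root lies in $W_P$). Compute
\[
vs_\beta \;=\; v\,(v^{-1}s_{i_1}v)\;=\;s_{i_1}v\;=\;w,
\]
so $w\in vW_P$, i.e., $[w]=[v]$ in $W/W_P$. But $v$ and $w$ both belong to $W^P$, and each coset has a unique minimal length representative; hence $v=w$, contradicting $\ell(w)=\ell(v)+1$. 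This contradiction yields $\beta\in R^+\setminus R_P^+$, completing the argument. The only nontrivial ingredient beyond the length identities above is the fact that reflections in roots of $R_P$ belong to $W_P$, which is a standard property of root subsystems associated to parabolic subgroups.
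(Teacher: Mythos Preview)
Your proof is correct. The paper does not give its own argument for this lemma; it simply records it as a known fact and cites \cite[Lemma 2.7]{LL01}. Your self-contained proof, relying only on the length characterization of $W^P$, the standard criterion $\ell(s_iu)>\ell(u)\Leftrightarrow u^{-1}(\alpha_i)\in R^+$, and the uniqueness of minimal coset representatives, is a clean way to supply the missing details.
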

\begin{proposition}\label{pro-irreducible}
Let $M(X)$ be a matrix of the operator $[c_1(X)]$ on  $H^\star(X,\C)$  with respect to $\mathcal{S}.$ Then
$M(X)$  is an irreducible, nonnegative, integral matrix.
\end{proposition}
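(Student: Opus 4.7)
The nonnegativity and integrality of $M(X)$ are immediate from the quantum Chevalley formula applied to $c_1(X)=\sum_{i\in I^P}n_i\sigma_{s_i}$ (Lemma \ref{lemma-Chernclass}): each matrix entry is a finite sum of products $n_j\cdot h_\alpha(\omega_{\alpha_j})$ where $n_j\in\Z_{>0}$ (Remark \ref{remark-Fano index}) and $h_\alpha(\omega_{\alpha_j})\in\Z_{\geq 0}$, hence a nonnegative integer.

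For irreducibility, by Proposition \ref{equal}(1) it suffices to show that the associated directed graph $D(M(X))$ is strongly connected. My plan is to establish three reachability statements: (A) every $\sigma_u$ reaches $\sigma_{w_0^P}$ along a directed path; (B) $\sigma_{w_0^P}$ reaches $\sigma_{0^P}$; (C) $\sigma_{0^P}$ reaches every $\sigma_v$. Composing yields a directed path between any two vertices. For (A) and (C), I use only the classical part of the Chevalley formula. Given any Bruhat cover $v=us_\alpha$ in $W^P$ (so $\alpha\in R^+\setminus R_P^+$ and $\ell(v)=\ell(u)+1$), Remark \ref{remark-degree} supplies some $j\in I^P$ with $h_\alpha(\omega_{\alpha_j})>0$, so the coefficient $\sum_{i\in I^P}n_i h_\alpha(\omega_{\alpha_i})$ of $\sigma_v$ in $c_1(X)\cdot\sigma_u$ is strictly positive. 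Thus every Bruhat cover in $W^P$ is an arc of $D(M(X))$. Since the Bruhat order on $W^P$ is graded with minimum $0^P$ and maximum $w_0^P$, chaining saturated Bruhat chains (constructed explicitly via Lemma \ref{reducedword} applied iteratively along a reduced word) proves (A) and (C).

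The main obstacle is (B). Because $\ell(w_0^P)=\dim X$, the classical part of $c_1(X)\cdot\sigma_{w_0^P}$ vanishes, but by the Fulton--Woodward nonvanishing of quantum products of Schubert classes (as already invoked in the corollary preceding the proposition) $c_1(X)\star\sigma_{w_0^P}$ is nonetheless nonzero. Hence its pure-quantum expansion yields at least one arc $\sigma_{w_0^P}\to\sigma_w$ with $\ell(w)<\dim X$. The strategy is to iterate this descent: from each such $\sigma_w$ ascend back to $\sigma_{w_0^P}$ via classical arcs, then apply fresh quantum Chevalley corrections. A useful tool is Remark \ref{remark-basis}(2), which combined with the positivity of the $n_j$ and the nonnegativity of all Chevalley coefficients shows that any $[c_1(X)]$-invariant coordinate subspace $V_0\subset H^\star(X,\C)$ is automatically invariant under each individual operator $[\sigma_{s_i}]$ for $i\in I^P$ (no cancellation can occur); consequently Fulton--Woodward nonvanishing can be applied separately to each $\sigma_{s_i}\star(-)$ to enlarge $V_0$ one Schubert class at a time. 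The core technical step, which I expect to be the hardest, is to verify that this iteration necessarily captures $\sigma_{0^P}$ in finitely many rounds. I anticipate this will follow from a careful induction on the minimum length of a Schubert class already present in the forward-reachable set of $\sigma_{w_0^P}$, possibly supplemented by a degree count in $qH^\star(X)$ (using that $c_1(X)^k\star\sigma_{w_0^P}$ has total complex degree $\dim X+k$), forcing $\sigma_{0^P}$ to appear and thereby, via (C), forcing $V_0=H^\star(X,\C)$.
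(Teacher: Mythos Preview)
Your treatment of nonnegativity/integrality and of steps (A) and (C) is fine and essentially coincides with the paper's argument (the paper phrases it as: every $\sigma_w$ occurs with positive constant term in $c_1^{\ell(w)}$, and $\sigma_{w_0^P}$ occurs in the classical part of $\sigma_w\star\sigma_{w^\vee}$).

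Step (B), however, is not proved, and your sketched strategy does not close the gap. Observing that a $[c_1(X)]$-invariant coordinate subspace $V_0$ is automatically invariant under each $[\sigma_{s_i}]$ is correct, but then ``applying Fulton--Woodward separately to each $\sigma_{s_i}\star(-)$'' cannot enlarge $V_0$: by hypothesis $V_0$ is already $[\sigma_{s_i}]$-invariant, so this gives you nothing new. Likewise, the proposed induction on the minimum length present in the forward-reachable set of $\sigma_{w_0^P}$ has no mechanism to decrease that minimum: applying $[c_1(X)]$ to a minimum-length $\sigma_v$ yields classical terms of larger length and quantum terms which may or may not exist, and there is no reason the quantum terms, when present, have length strictly below the current minimum. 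The vague degree count you mention does not force $\sigma_{0^P}$ to appear either.

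The paper resolves (B) with a single clean trick that you are missing. Since (by your own (C)) every $\sigma_w$ occurs in $c_1^{\ell(w)}$ with positive coefficient, the operator $T:=\sum_{m=0}^{\dim X}[c_1(X)]^m$ is a positive combination of \emph{all} the operators $[\sigma_w]$, $w\in W^P$, not just of the divisor operators $[\sigma_{s_i}]$. Now apply Fulton--Woodward nonvanishing to $\sigma_{w_0^P}\star\sigma_{w_0^P}$: there exist $u\in W^P$ and $d$ with $c_{w_0^P,w_0^P}^{u^\vee,d}>0$. By the $S_3$-symmetry of Gromov--Witten invariants (and since $(w_0^P)^\vee=0^P$) this equals $c_{u^\vee,w_0^P}^{(0^P)^\vee,d}$, so $\sigma_{0^P}$ occurs in $\sigma_{u^\vee}\star\sigma_{w_0^P}$ and hence in $T(\sigma_{w_0^P})$. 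This is exactly the bridge from $\sigma_{w_0^P}$ to $\sigma_{0^P}$ that your proposal lacks.
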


\begin{proof} As we can see in Lemma \ref{lemma-Chernclass}, the first Chern class $c_1(X)$ of the homogeneous variety $X=G/P$ is a nonnegative integral combination of Schubert divisor classes. By the quantum Chevalley formula, each  coefficient in the quantum multiplication $\sigma_{s_i}\star \sigma_u$, being of the form $h_\alpha(\omega_{\alpha_i})$, is again a  nonnegative integer. Moreover, $H^\star(X, \mathbb{C})$ is obtained from $qH^*(X, \mathbb{C})$ by taking specializations at $q_i=1$ for all $i$. It follows that $M(X)$ is a nonnegative integral matrix.

 Suppose $M=M(X)$ is reducible. Then, by definition, there exists a permutation matrix $P$ such that $M=P^TM'P$ for a block-upper triangular matrix $M'$. It follows that
  $M^m=P^{T}(M')^mP$ is  reducible for any $m\in\mathbb{Z}_{\geq0}$, and so is
  $\sum_{m=0}^n M^m$ (recall $n=\dim_{\mathbb{C}} G/P$). 
  Let $V$ be the nontrivial proper coordinate subspace of $H^\star(X)$ which is invariant under the operator    $T:=\sum_{m=0}^n([c_1(X)])^m$.
 By the quantum Chevalley formula, for any $u\in W^P$,     the coefficient $b_u(q)$ of   $\sigma_u$ in the quantum product $c_1(X)\star \cdots\star c_1(X)$ belongs to $\mathbb{Z}_{\geq 0}[q]$. Recall  $q=(q_i)_{i\in I^P}$, and denote $\vec 1:=(1)_{i\in I^P}$, $\vec 0:=(0)_{i\in I^P}$.   First we claim that
    \begin{enumerate}
     \item for any $w\in W^P$,  the coefficient  $b_w(q)\in \mathbb{Z}_{\geq 0}[q]$ of $\sigma_w$   in   the quantum multiplication  $c_1^{\ell(w)}$  of $c_1:=c_1(X)$ has
           a positive constant term  $b_{w}(\vec 0)>0$;  therefore  the operator $T$ must be  of the form
                 $T= \sum_{w\in W^{P}}a_w( {\vec 1})[\sigma_w]$,
    where $a_w({q})\in\mathbb{Z}_{\geq 0}[ {q}]$ with $a_w( {\vec 0})>0$, and   $[\sigma_w]$ denotes the operator on $H^\star(X, \mathbb{C})$ defined by
            $[\sigma_w](\beta)=\sigma_w\star\beta|_{q=\vec 1}$;
     \item $\sigma_{w_0^P}\in V$.
    \end{enumerate}
  To prove  $(1),$  we proceed by induction. If $l(w)=0,$ then this is trivial since $w=w_{0^P}$.   If $\ell(w)=l$, then
   we take a reduced decomposition $w=s_{i_1}\cdots s_{i_l}$. By Lemma \ref{reducedword}, we have
       $v:=s_{i_2}\cdots s_{i_l}\in W^P$ and $\gamma:=v^{-1}(\alpha_{i_1})\in R^+\setminus R_P^+$, which implies that   $h_\gamma(\omega_j)\neq 0$ for some $j\in I^P$.  Thus
       $\sigma_w$ occurs in the classical part of $\sigma_{s_j}\star \sigma_v$ with a positive coefficient by the Chevalley formula, and so does in $c_1\star \sigma_v$. Clearly,  $\ell(v)=\ell(w)-1$. Therefore     the coefficient $b_v(q)\in \mathbb{Z}_{\geq 0}[q]$ of $\sigma_v$  in $c_1^{\ell(v)}$ satisfies $b_{v}(\vec 0)>0$   by the induction hypothesis, and consequently   $\sigma_w$  occurs in $c_1^{\ell(w)}$ with the same property due to the positivity of quantum multiplication of Schubert classes.
      To prove (2), we take $w\in W^P$ such that $\sigma_w\in V$. By  item (3) in section 2.2,   $\sigma_{w_0^P}$ occurs in the classical part of $[\sigma_{w}](\sigma_{w^\vee})=\sigma_w\star \sigma_{w^\vee}$, and hence     it occurs in $T(\sigma_{w^\vee})$.

  Now choose $u\in W^P$ and $d\in H^{2n-2}(X, \mathbb{Z})$ such that $c_{w_0^P, w_0^P}^{u^\vee, d}>0$. Such elements always exist since the quantum product of two Schubert classes never vanishes \cite[Theorem 9.1]{FW}. But since we have $c_{u^\vee, w_0^P}^{{(0^P)^\vee}, d}  =c_{w_0^P, w_0^P}^{u^\vee, d}>0$ by the symmetry of Gromov-Witten invariants,  the basis element $\sigma_{0^P}$ occurs in the quantum product
  $\sigma_{u^\vee}\star \sigma_{w_0^P}$ and hence in $T(\sigma_{w_0^P})$. Thus   $\sigma_{0^P}\in V$, and  hence for any $w\in W^P$, $\sigma_w$ occurs in  $T(\sigma_{0^P})$ by noting   $[\sigma_w](\sigma_{0^P})=\sigma_w\star \sigma_{0^P}=\sigma_w$.    This implies $V=H^\star(X)$, contradicting the hypothesis    $V\subsetneqq H^\star(X)$.  Therefore, the matrix $M(X)$ is irreducible.
 \end{proof}

 \begin{remark}
\bn
\item
We note that the proof of Proposition \ref{pro-irreducible} works for any quantum multiplication operators of the form $[\sigma]=\sum_{i\in I^P}a_i [\sigma_{s_i}]$ for positive real numbers $a_i.$
\item
We mention that a general idea of proving the irreducibility of $M(X)$ was taken more or less from Lemma $9.3$ (p. $384$) in \cite{Riet2}, where Rietsch used some Peterson's results to show the irreducibility of the matrix  $[\sigma]_\mathcal{S}$  for a flag manifold $X$ of type $A$, where $\sigma=\sum_{w\in W^P}\sigma_{w}\in H^\star(X,\C)$.
\en
\end{remark}

Recall that $r=\mathrm {g.c.d}\hspace{0.03in}\{\hspace{0.02in}n_i \hspace{0.02in}|  \hspace{0.04in}i\in I^P\}$ is the Fano index of $X$, and $\mathcal{S}$ is an ordered basis of $H^\star(X)$. The order on $\mathcal{S}$ induces a linear order on the index set $W^P$, denoted as $\prec$. Let us make a partition on the set $W^P $ into $r$ subsets.
For each $0\leq a\leq r-1$, let  $W^P(a)$ be the subset of $W^P$ consisting of elements $u$ with $l(u)\equiv a \hspace{0.1in} \mathrm{mod}\hspace{0.07in} r$, and we assign to each  $W^P(a)$ the weight $a$.  Now we define a new linear order $\prec_q$ on $W^P$ as

\begin{displaymath} u\prec_q v \Leftrightarrow  \left\{\begin{array}{c}
u\in W^P(a),\hspace{0.07in} v\in W^P(b),\hspace{0.07in}\textrm{and}\hspace{0.07in} a>b, \\
u,v \in W^P(a), \hspace{0.07in}\mathrm{and}\hspace{0.07in} u\prec v.
\end{array}\right.
\end{displaymath}
\noindent The order $\prec_q$ on $W^P$ naturally makes  the Schubert basis for $H^\star(X)$ into an ordered basis, denoted as $\mathcal{S}_q$, in the way that the basis elements $\sigma_u$ for  $u\in W^P(r-1)$ come first, $\sigma_u$ for  $u\in W^P(r-2)$ second, and so on.

 \begin{lemma}\label{lemma-divide}
 The matrix $M(X)_q$ of the operator $[c_1(X)]$ on $H^\star(X)$ with respect to the ordered basis $\mathcal{S}_q$ is in the superdiagonal $(m_1,m_2,...,m_r)$-block form (\ref{superblock}) with $k=r$, where $m_i:=|W^P(r-i-1)|$ for $i=1,...,r,$  and $W^P(-1):=W^P(r-1)$.
 \end{lemma}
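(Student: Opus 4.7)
My plan is to prove Lemma~\ref{lemma-divide} by showing that $[c_1(X)]$ shifts the length grading by exactly one modulo $r$; this forces the matrix $M(X)_q$ into the claimed block shape as soon as the basis is ordered by $\prec_q$.

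First I would record the key divisibility: for every root $\alpha \in R^+ \setminus R_P^+$ that contributes a quantum term in the quantum Chevalley formula, the Chern number $n_\alpha = \int_X c_1(X) \cdot d(\alpha)$ is divisible by $r$. Indeed, writing $d(\alpha) = \sum_{i \in I^P} d_i \sigma(s_i)$ with $d_i \in \mathbb{Z}_{\geq 0}$, one has $n_\alpha = \sum_i d_i n_i$, and each $n_i$ is divisible by $r = \gcd\{n_j \mid j \in I^P\}$ by the very definition of the Fano index. Plugging this into the quantum Chevalley formula, and specializing $q_j = 1$, one sees that for any $u \in W^P$ and any $i \in I^P$ the operator $[\sigma_{s_i}]$ sends $\sigma_u$ to a nonnegative integer combination of classes $\sigma_v$ with $\ell(v) = \ell(u)+1$ (classical terms) and classes $\sigma_w$ with $\ell(w) = \ell(u)+1-n_\alpha$ (quantum terms); both satisfy $\ell \equiv \ell(u)+1 \pmod r$. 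Since $c_1(X) = \sum_{i \in I^P} n_i \sigma_{s_i}$ by Lemma~\ref{lemma-Chernclass}, summing with the positive weights $n_i$ preserves the congruence, so $[c_1(X)]$ sends $\sigma_u$ for $u \in W^P(a)$ into the linear span of $\{\sigma_v : v \in W^P(a+1 \bmod r)\}$. This is the heart of the argument.

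The remaining step is to read off the block form from this congruence under the ordering $\prec_q$. By construction, $\mathcal{S}_q$ is the concatenation of the sub-bases indexed by $W^P(r-1), W^P(r-2), \ldots, W^P(0)$ in that order. The previous step then says that $[c_1(X)]$ sends the first group $W^P(r-1)$ into the last group $W^P(0)$, and for every $j \geq 2$ sends the $j$-th group $W^P(r-j)$ into the $(j-1)$-st group $W^P(r-j+1)$. In matrix language this is precisely the zero pattern of the superdiagonal form (\ref{superblock}) with $k = r$: the only possibly nonzero sub-blocks sit at positions $(1,2),(2,3),\ldots,(r-1,r)$ and $(r,1)$, and the block sizes are the cardinalities $|W^P(a)|$ arranged cyclically, which matches the claimed formula for $m_i$ under the convention $W^P(-1):=W^P(r-1)$.

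I expect the only real pitfall here to be pure bookkeeping, namely keeping track of the cyclic indexing so that the $m_i$'s line up correctly with the $W^P(a)$'s; the substantive content lies entirely in the divisibility $r \mid n_\alpha$ together with the quantum Chevalley formula, and once those are in place the block structure drops out immediately. I would not need to use irreducibility or nonvanishing of any specific block for this lemma, since the statement only prescribes the zero pattern of $M(X)_q$ and not that the superdiagonal blocks are themselves nonzero.
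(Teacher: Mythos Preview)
Your proof is correct and follows essentially the same approach as the paper's: both deduce the block structure from the degree congruence $\ell(v)\equiv \ell(u)+1 \pmod r$ for any $\sigma_v$ appearing in $[c_1(X)](\sigma_u)$, with your version spelling out in more detail why this congruence holds via $r\mid n_\alpha$. The paper's proof additionally asserts (as ``obvious'') that the superdiagonal blocks are nonzero, which is not strictly part of the lemma's formal statement but is needed for the subsequent application of Proposition~\ref{Pro divide}; you correctly note that this is not required for the lemma as stated.
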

 \begin{proof}
 It is obvious that the blocks $A_{1,2},\cdots, A_{r-1, r}$, $A_{r,1}$ are nonzero. Clearly,
 if a basis element $\sigma_v$ appears with a nonzero coefficient in the expansion of  $[c_1(X)] (\sigma_u)$ in the basis $\mathcal{S}_q$, then  by the degree condition of quantum multiplication  we have $$l(v)\equiv l(u)+1 \hspace{0.1in} \mathrm{mod}\hspace{0.07in} r.$$ This proves the lemma.
 \end{proof}

The next lemma  holds for general Fano manifolds (\cite[Remark 3.1.3]{GGI}). In the case of homogeneous spaces $X=G/P$, it also follows immediately from Lemma \ref{lemma-divide} and Proposition \ref{Pro divide}, by noting that $M(X)$ is cogredient to $M(X)_q$.
 \begin{lemma}\label{coro-divide-p}
 The Fano index $r$ of $X$ divides the index of imprimitivity $h(M(X))$ of $M(X).$
 \end{lemma}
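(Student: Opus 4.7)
The plan is to deduce this lemma as an immediate combination of the three preceding results: the irreducibility of $M(X)$ from Proposition \ref{pro-irreducible}, the superdiagonal block structure of $M(X)_q$ from Lemma \ref{lemma-divide}, and the divisibility criterion from Proposition \ref{Pro divide}. Since the statement as phrased is a classical fact for arbitrary Fano manifolds, my proof will be the homogeneous-space version which uses only tools already developed in this paper.

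First I would observe that the matrices $M(X)$ and $M(X)_q$ represent the same operator $[c_1(X)]$ on $H^\star(X,\mathbb{C})$ with respect to two ordered bases, $\mathcal{S}$ and $\mathcal{S}_q$, that differ only by a permutation of basis vectors. Consequently there exists a permutation matrix $P$ such that $M(X) = P^T M(X)_q P$, so $M(X)$ is cogredient to $M(X)_q$ by definition. Since cogredience preserves both irreducibility and the index of imprimitivity, in particular $h(M(X)) = h(M(X)_q)$.

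Next I would invoke Lemma \ref{lemma-divide}, which puts $M(X)_q$ in the superdiagonal $(m_1,\ldots,m_r)$-block form of (\ref{superblock}) with $k=r$ and with all $r$ blocks $A_{1,2}, \ldots, A_{r-1,r}, A_{r,1}$ nonzero. Because $M(X)$ is irreducible by Proposition \ref{pro-irreducible}, so is $M(X)_q$. We may therefore apply Proposition \ref{Pro divide} to the irreducible matrix $M(X)_q$: the existence of a cogredient superdiagonal $r$-block form with all blocks nonzero forces $r \mid h(M(X)_q)$. Combined with the equality $h(M(X)) = h(M(X)_q)$ from the previous step, this yields $r \mid h(M(X))$, which is exactly the claimed divisibility.

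There is essentially no obstacle here: all the substantive work has already been carried out in Proposition \ref{pro-irreducible} (irreducibility via the connectedness argument through $\sigma_{w_0^P}$ and $\sigma_{0^P}$) and in Lemma \ref{lemma-divide} (where the key point is that $[c_1(X)]$ raises $\ell(u)$ by $1$ modulo $r$, because every quantum correction $q^{d(\alpha)}$ contributes $n_\alpha$ to the degree, and $r$ divides every $n_i$ hence every $n_\alpha$). The only small verification to include explicitly is that two matrices of the same operator in bases related by a permutation are indeed cogredient, so that Proposition \ref{Pro divide} can legitimately be applied to $M(X)$ via its conjugate $M(X)_q$.
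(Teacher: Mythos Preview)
Your proposal is correct and follows essentially the same approach as the paper: the paper's own justification of this lemma (which appears in the text immediately preceding it rather than as a separate proof) is precisely that $M(X)$ is cogredient to $M(X)_q$, and then one combines Lemma \ref{lemma-divide} with Proposition \ref{Pro divide}. Your write-up just makes the cogredience and the equality $h(M(X))=h(M(X)_q)$ explicit, which is fine.
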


 \begin{definition}For a homogeneous space $X=G/P,$
 define the directed graph $D(X)$ as $$D(X)=D([c_1(X)],\mathcal{S}),$$
 where $D([c_1(X)],\mathcal{S})$ was defined in Fact \ref{fact-graph}.
 \end{definition}

 Since $M(X)$ is irreducible, then its associated directed graph $D(X)$ is strongly connected by Proposition \ref{equal}, and hence it makes sense to consider the index $h(D(X))$ of imprimitivity of $D(X).$ We will show that $h(D(X))$ divides $r$.
 \smallskip

 Let  $Q^\vee$ (resp. $Q_P^\vee$) denote  the coroot (sub)lattice $\oplus_{\alpha\in \Delta}\mathbb{Z}\alpha^\vee$ (resp. $\oplus_{\alpha\in \Delta_P}\mathbb{Z}\alpha^\vee$). Then we have the natural identifications $$Q^\vee/Q_P^\vee=H_2(G/P,\Z)=H^{2n-2}(G/P,\Z).$$

 \begin{lemma}\label{uniqueness}
 For any $\lambda_P\in Q^\vee/Q_P^\vee$,  there exists a unique $\lambda_B\in Q^\vee$ such that $\lambda_P=\lambda_B+Q_P^\vee$ and
                $\langle \alpha, \lambda_B\rangle  \in \{0, -1\}$ for all $\alpha\in R^+_P$ with respect to the natural pairing $\langle\cdot, \cdot\rangle: \mathfrak{h}^*\times \mathfrak{h}\to \mathbb{C}$.
 \end{lemma}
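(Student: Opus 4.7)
My plan is to reinterpret the desired condition in terms of the affine Weyl group of the Levi and then invoke the standard fact that a closed alcove is a fundamental domain. Let $V_P := \mathrm{span}_{\R}(Q_P^\vee) \subset \mathfrak{h}$, and decompose the Levi root system as $R_P = \bigsqcup_j R_{P,j}$ into irreducible components with highest roots $\theta_{P,j}$. First I would prove that for $\lambda_B \in Q^\vee$, the condition $\langle \alpha, \lambda_B\rangle \in \{0,-1\}$ for all $\alpha \in R_P^+$ is equivalent to
\[
\langle \alpha_i, \lambda_B\rangle \leq 0 \text{ for all } i \in I_P, \qquad \langle \theta_{P,j}, \lambda_B\rangle \geq -1 \text{ for each } j.
\]
The forward direction is immediate. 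For the converse, every $\alpha \in R_{P,j}^+$ satisfies $0 \leq \alpha \leq \theta_{P,j}$ in the root order, so antidominance forces $\langle\alpha,\lambda_B\rangle \leq 0$, and $\theta_{P,j} - \alpha$ being a nonnegative combination of simple roots forces $\langle\alpha,\lambda_B\rangle \geq \langle\theta_{P,j},\lambda_B\rangle \geq -1$. Integrality of the pairing (from $\lambda_B \in Q^\vee$) then sandwiches the value into $\{-1,0\}$. Geometrically, these conditions state that $\lambda_B$, projected along $V_P^c := \bigcap_{i \in I_P}\ker \alpha_i$ onto $V_P$, lies in the closed antidominant alcove $\bar{A}^-$ of the affine Weyl group $\widetilde{W}_P := W_P \ltimes Q_P^\vee$ acting on $V_P$.

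Next I would establish the algebraic observation that $W_P$ acts trivially on the quotient $Q^\vee/Q_P^\vee$. For any simple reflection $s_i$ with $i \in I_P$ and $\lambda \in Q^\vee$, the formula $s_i(\lambda) = \lambda - \langle \alpha_i,\lambda\rangle \alpha_i^\vee$ gives $s_i(\lambda) - \lambda \in \Z\alpha_i^\vee \subset Q_P^\vee$, so every $w \in W_P$ satisfies $w(\lambda) \equiv \lambda \pmod{Q_P^\vee}$. Combined with translations by $Q_P^\vee$, this shows that the $\widetilde{W}_P$-orbit of any $\lambda \in Q^\vee$ in $\mathfrak{h}$ is precisely the coset $\lambda + Q_P^\vee$.

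To conclude: $\bar{A}^-$ is a fundamental domain for $\widetilde{W}_P$ on $V_P$, and since $\widetilde{W}_P$ fixes $V_P^c$ pointwise (as $\alpha_i|_{V_P^c} = 0$ for $i \in I_P$ and $Q_P^\vee \subset V_P$), the set $\bar{A}^- \oplus V_P^c$ is a fundamental domain in $\mathfrak{h}$. Hence the $\widetilde{W}_P$-orbit of $\lambda \in Q^\vee$ meets this fundamental domain in exactly one point $\lambda_B$, which lies in $\lambda + Q_P^\vee$ by the previous paragraph and satisfies the desired pairing condition by the first paragraph. Uniqueness is likewise forced by uniqueness of the fundamental-domain representative. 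The main technical point is verifying that antidominance together with the single highest-root inequality suffices to control the pairing with every $\alpha \in R_P^+$; once this is in hand, the remaining steps are essentially bookkeeping about how the decomposition $\mathfrak{h} = V_P \oplus V_P^c$ interacts with the $\widetilde{W}_P$-action.
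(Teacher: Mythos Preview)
The paper does not supply its own proof of this lemma; it attributes the statement to Peterson \cite{Pert}, proved by Woodward \cite{Wood}, in the formulation of Lam--Shimozono \cite{LS}. Your argument is correct and is essentially the standard one lying behind those references: the condition $\langle\alpha,\lambda_B\rangle\in\{0,-1\}$ for all $\alpha\in R_P^+$ is precisely the statement that (the $V_P$-component of) $\lambda_B$ lies in the closed antidominant fundamental alcove for the affine Weyl group $\widetilde W_P=W_P\ltimes Q_P^\vee$, and since the $\widetilde W_P$-orbit of any $\lambda\in Q^\vee$ coincides with the coset $\lambda+Q_P^\vee$, existence and uniqueness follow from the fundamental-domain property. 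Your reduction of the full set of inequalities to the simple-root and highest-root constraints, together with integrality, is the right way to make the alcove identification explicit. In short, your proof fills in what the paper leaves to the citations, and does so along the expected lines.
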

 Given $\lambda_B$ in the above lemma, we let $P^\prime$ be the parabolic subgroup such that  $\Delta_{P'}:=\{\alpha\in \Delta_P~|~\langle  \alpha, \lambda_B\rangle =0\}$. Recall that for a parabolic subgroup $P,$ $w_P$ denotes the longest  element of $W_P.$
 Let us record the following comparison formula which will be used to prove our key lemma. Both Lemma \ref{uniqueness} and the next proposition are due to Peterson \cite{Pert}, proved  by Woodward \cite{Wood}. Here we are following the equivalent formulation given in    \cite[Theorem 10.13]{LS}.
 \begin{proposition}\label{propcomparison}
   For any $u, v, w\in W^P$, we have
 $$c_{u,v}^{w^\vee, \lambda_P }=c_{u, v}^{(ww_Pw_{P'})^\vee,   \lambda_B},$$
among the structure constants for $qH^\star(G/P)$ and $qH^\star(G/B)$ respectively.
\end{proposition}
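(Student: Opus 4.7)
The plan is to prove this comparison formula following the geometric strategy of Peterson--Woodward: lift stable maps from $G/P$ to $G/B$ through the natural projection $\pi:G/B\to G/P$ with fiber $P/B$, and track how the three Schubert incidence conditions and the curve class transform under the lift.

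First I would interpret $c_{u,v}^{w^\vee,\lambda_P}$ geometrically as a count of morphisms $f:\mathbb{P}^1\to G/P$ of class $\lambda_P$ whose marked points $f(0),f(1),f(\infty)$ meet general translates of $Y(u),Y(v),Y(w)$. Every such $f$ admits lifts $\tilde f:\mathbb{P}^1\to G/B$, and the possible degrees of these lifts form an entire coset $\lambda_B+Q_P^\vee\subset Q^\vee$. By Lemma \ref{uniqueness}, $\lambda_B$ is distinguished as the minimal lift, so a dimension count should show that the forgetful map $\overline M_{0,3}(G/B,\lambda)\to \overline M_{0,3}(G/P,\lambda_P)$ is generically finite precisely when $\lambda=\lambda_B$ and has positive-dimensional fibers otherwise. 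Consequently only the lift of degree $\lambda_B$ contributes to a nonzero invariant on $G/B$ matching the count on $G/P$.

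Next I would analyze how the three Schubert incidence conditions pull back. For $u,v\in W^P$ the first two are unproblematic, since $\pi^{-1}(Y(u))=Y(u)\subset G/B$ when $u\in W^P$. The incidence at $f(\infty)$ requires more care: the generic fiber $\pi^{-1}(\mathrm{pt})\cong P/B$ forces the lifted marked point to sweep out a translate governed by the longest element $w_P$, while the constraint that the lifted degree be exactly $\lambda_B$ further restricts the point to lie in the sub-fiber $P'/B$, where $P'$ is the sub-parabolic picked out by Lemma \ref{uniqueness}. Composing these two shifts should produce the substitution $w\mapsto ww_Pw_{P'}$ in the third upper index.

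The main obstacle will be rigorously verifying the generic finiteness claim for the map of moduli spaces when $\lambda=\lambda_B$, and pinning down the precise transformation of the Schubert incidence at $f(\infty)$. My preferred approach would be equivariant localization on $T$-fixed stable maps: for every $T$-fixed curve contributing to the left-hand invariant, I would construct the unique $T$-fixed lift of degree $\lambda_B$ and verify that the localization weights on $G/B$ match those on $G/P$ using the quantum Chevalley formula on both sides. A purely algebraic alternative would be induction on $\ell(w)$ using the quantum Chevalley formula for $G/P$ and $G/B$ together with the associativity of quantum multiplication; this should bypass the moduli-space geometry but at the cost of hiding the role of the lift $\lambda_B$. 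Either way, the final bookkeeping identity $\ell(ww_Pw_{P'})=\ell(w)+\ell(w_P)-\ell(w_{P'})$, derivable from $w\in W^P$ and the minimality of $\lambda_B$, will be needed to check that the dimension of the expected count is preserved under the substitution.
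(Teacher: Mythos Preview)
The paper does not contain its own proof of this proposition. Immediately before stating it, the authors write that both Lemma~\ref{uniqueness} and this proposition are due to Peterson, proved by Woodward, and that they are quoting the equivalent formulation from Lam--Shimozono. In other words, Proposition~\ref{propcomparison} is imported as a black box from the literature; there is no argument in the paper to compare your proposal against.

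As for your proposal itself: the outline you give---lift curves through $\pi:G/B\to G/P$, single out the distinguished degree $\lambda_B$ via a dimension/minimality argument, and track the third Schubert condition through the fiber $P/B$---is indeed the shape of Woodward's argument. But what you have written is a plan, not a proof. The two steps you flag as ``the main obstacle'' (generic finiteness of the map of moduli spaces precisely at $\lambda=\lambda_B$, and the exact identification of the third incidence condition as $ww_Pw_{P'}$) are essentially the entire content of the theorem; you have not supplied arguments for either, only two possible strategies (localization or Chevalley induction) without carrying either out. The localization route in particular is not how Woodward proceeds, and making it rigorous would require substantial additional work on matching fixed-point contributions that you have not begun. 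If your goal is to reproduce the result for this paper, the correct move is simply to cite Woodward and Lam--Shimozono, as the authors do.
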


We will need the following lemma. Here we provide an involved combinatorial proof, while a  geometric proof by carefully studying the space of lines in $G/P$ (cf.
\cite{strickland:lines} and \cite{li.mihalcea:lines}) is quite desirable and will probably be much simpler.
 \begin{lemma}\label{qiterm}
     For any $i\in I^P$,   there exists $u\in W^P$ of length $\ell(u)=n_i-1$ such that $\sigma_{s_i}\star \sigma_u$ contains the $q_i\sigma_{0^P}$-term with coefficient $1$.
 \end{lemma}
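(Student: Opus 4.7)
The plan is to apply the quantum Chevalley formula directly. For $u\in W^P$ with $\ell(u)=n_i-1$, the coefficient of $q_i\sigma_{0^P}$ in $\sigma_{s_i}\star \sigma_u$ equals
\[
\sum_{\alpha} h_\alpha(\omega_{\alpha_i}),
\]
summed over roots $\alpha\in R^+\setminus R_P^+$ satisfying $d(\alpha)=\sigma(s_i)$ and $us_\alpha\in W_P$. The condition $d(\alpha)=\sigma(s_i)$ forces $h_\alpha(\omega_{\alpha_i})=1$ and $n_\alpha=n_i$, so the length condition $\ell(0^P)=\ell(u)+1-n_\alpha$ appearing in the quantum Chevalley formula becomes automatic. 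Hence the coefficient simply counts the admissible roots, and the lemma reduces to finding $u\in W^P$ of length $n_i-1$ for which exactly one root $\alpha\in R^+\setminus R_P^+$ with $d(\alpha)=\sigma(s_i)$ satisfies $us_\alpha\in W_P$.

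Let $R^+_{(i)}:=\{\alpha\in R^+\setminus R_P^+:d(\alpha)=\sigma(s_i)\}$; these are exactly the positive roots whose $\alpha_i$-component contributes $1$ to the degree and whose $\alpha_j$-component vanishes for $j\in I^P\setminus\{i\}$. I would look for $u$ inside $W_Ps_\alpha\cap W^P$ for some $\alpha\in R^+_{(i)}$. The correct choice of $\alpha$ depends on the case: for Grassmannians in simply-laced types one can take $\alpha=\alpha_i$ (so $u\in W_Ps_i\cap W^P$), while for Lagrangian Grassmannians one must take $\alpha$ to be the highest root of $R^+_{(i)}$ (for instance in the $C_2$ case $\alpha=2\alpha_1+\alpha_2$ and $u=w_0^P=s_1s_2s_1$ works). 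The key combinatorial step is verifying the length identity $\ell(u)=n_i-1$, which matches the Chern-theoretic quantity $n_i=2h_{\alpha_i}(\rho_P)$ with the length of a specific coset representative. Establishing this identity uniformly appears to require detailed case-analysis over Dynkin types, and this is likely the ``involved combinatorial'' step the authors allude to.

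For uniqueness of the contributing root, one argues that distinct $\alpha,\beta\in R^+_{(i)}$ typically yield distinct right cosets $W_Ps_\alpha\neq W_Ps_\beta$, so $u$ lies in exactly one such coset; in any degenerate case where multiple roots give the same coset, the sum of contributions still equals $1$ by an orbit-counting argument using the $W_P$-module structure of $R^+_{(i)}$ (which are the weights of the $\alpha_i$-isotypical component of $\mathfrak{g}/\mathfrak{p}$). A geometric alternative, pointed out by the authors, is to compute the $2$-pointed Gromov--Witten invariant $\langle\sigma_u,[\mathrm{pt}]\rangle_{\sigma(s_i)}=\int_X[\widetilde{L}_p]\cdot\sigma_u$, where $\widetilde{L}_p\subset G/P$ is the union of degree-$\sigma(s_i)$ lines through a generic point $p$; one then seeks $u$ such that $\sigma_{u^\vee}$ appears in the Schubert expansion of $[\widetilde{L}_p]$ with coefficient $1$, which should be more tractable using the well-developed theory of minimal rational curves on $G/P$.
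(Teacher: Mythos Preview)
Your reduction via the quantum Chevalley formula is correct: the coefficient of $q_i\sigma_{0^P}$ in $\sigma_{s_i}\star\sigma_u$ is indeed the number of roots $\alpha\in R^+\setminus R_P^+$ with $d(\alpha)=\sigma(s_i)$ and $us_\alpha\in W_P$, and your observation that $d(\alpha)=\sigma(s_i)$ forces $h_\alpha(\omega_{\alpha_i})=1$ and $n_\alpha=n_i$ is right. However, what you have written is an outline, not a proof. Both of the steps you flag as ``to be done'' --- the length identity $\ell(u)=n_i-1$ for your candidate $u$, and the uniqueness of the contributing root --- are the entire content of the lemma, and neither is established. Your uniqueness sketch in particular is not sound as stated: the assertion that distinct $\alpha,\beta\in R^+_{(i)}$ give distinct right cosets $W_Ps_\alpha$ amounts to $s_\alpha s_\beta\notin W_P$, which is not obvious and would itself require a type-by-type check; and the ``orbit-counting'' fallback is too vague to evaluate. (Your $C_2$ example, incidentally, is $Sp_4/P_1\cong\mathbb{P}^3$, not the Lagrangian Grassmannian; for $LG(2)$ one has $n_i=3$ and $u=s_1s_2$ of length $2$.)

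The paper's proof avoids both difficulties by passing through the Peterson--Woodward comparison formula (Proposition~\ref{propcomparison}): one lifts $\lambda_P=\alpha_i^\vee+Q_P^\vee$ to the unique $\lambda_B\in Q^\vee$ of Lemma~\ref{uniqueness}, and the case analysis is devoted to showing that $\lambda_B=\gamma^\vee$ is actually a coroot with $\ell(s_\gamma)=2\langle\rho,\gamma^\vee\rangle-1$ and that $u:=w_Pw_{P'}s_\gamma\in W^P$. Once this is done, the comparison formula gives $c_{s_i,u}^{(0^P)^\vee,\lambda_P}=c_{s_i,u}^{(us_\gamma)^\vee,\gamma^\vee}$ in $qH^\star(G/B)$, which the quantum Chevalley formula for $G/B$ computes directly as $h_\gamma(\omega_{\alpha_i})=1$: in $G/B$ the degree $\gamma^\vee$ determines the root $\gamma$ uniquely, so the uniqueness issue you face simply does not arise. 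The length identity $\ell(u)=n_i-1$ then falls out \emph{a posteriori} from the nonvanishing of this invariant, rather than being checked in advance. So the key idea you are missing is precisely this transfer to $G/B$.
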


\begin{proof}
Let $\lambda_P=\alpha_i^\vee+Q_P^\vee$. The unique lifting $\lambda_B\in Q^\vee$ is a priori, not necessarily a coroot. Nevertheless,  let us
first assume the next  claims hold.
\begin{enumerate}
  \item[a)] There  exists    $\gamma\in R^+$ of length $\ell(s_\gamma)=2 \langle \rho, \gamma^\vee\rangle-1$ such that $\lambda_B=\gamma^\vee$ is the coroot of $\gamma$, where   $\rho:=\rho_B$ equals the sum of fundamental weights of $(G, \Delta)$.
  \item[b)] $\ell(w_Pw_{P'}s_\gamma)=\ell(w_Pw_{P'})+\ell(s_\gamma)$.
\end{enumerate}
 We then  set $u:= w_Pw_{P'}s_\gamma$.  Take any $\alpha\in \Delta_P$. If $\langle \alpha, \gamma^\vee\rangle=0$, then $s_\gamma(\alpha)=\alpha\in \Delta_{P'}$, and hence
       $ u(\alpha)=   w_Pw_{P'}(\alpha)\in R^+$, by noting    $ w_Pw_{P'}\in W^{P'}$.
   If $\langle \alpha, \gamma^\vee\rangle\neq 0$, then it  equals   $-1$ due to the property of $\lambda_B=\gamma^\vee$.
      On one hand, $u(\alpha)\in R$ is a root and hence must be either purely nonpositive or purely nonnegative combinations of the simple roots; on the other hand,
          $u(\alpha)= w_Pw_{P'} (\alpha+  \gamma)=w_Pw_{P'} (\alpha_i)+  w_Pw_{P'} (\alpha+  \gamma-\alpha_i)\in \alpha_i+\sum_{\beta\in \Delta_P}\mathbb{Z}\beta$ (since $\alpha+\gamma-\alpha_i\in  \sum_{\beta\in \Delta_P}\mathbb{Z}\beta, w_Pw_{P'}\in W_P$ and $s_j(\alpha_i)\in \alpha_i+\sum_{\beta\in \Delta_P}\mathbb{Z}\beta$ for all $\alpha_j\in \Delta_P$). Therefore  $u(\alpha)\in R^+$.   It follows that $u\in W^P$ (by a characterization of minimal length representatives e.g. in \cite[section 5.4]{Hu1}).
Consequently, we have $c_{s_i, u}^{(0^P)^\vee, \lambda_P}=c_{s_i, u}^{(w_Pw_{P'})^\vee, \lambda_B}= c_{s_i, u}^{(us_\gamma)^\vee, \gamma^\vee}=1$ by Proposition \ref{propcomparison} and the quantum Chevalley formula of $qH^\star(G/B)$.

   It remains to prove the claims.
  Denote by  $\Delta(\alpha_i)$ the connected component of $\alpha_i$ in the Dynkin sub-diagram $\{\alpha_i\}\cup \Delta_P$ of $\Delta$, and by
  $\{\Pi_1, \cdots, \Pi_k\}$   the connected components of Dynkin diagram of $\Delta(\alpha_i)\setminus\{\alpha_i\}$.
  Let $\omega_{(j)}$ (resp. $\omega'_{(j)}$) denote the longest   element in the Weyl subgroup of $\Pi_j$ (resp. $\Pi_j \cap \Delta_{P'}$), and set $w_j:=\omega_{(j)}\omega'_{(j)}$. It follows that $w_iw_j=w_jw_i$ for any $i, j$, and that $w_Pw_{P'}=w_1\cdots w_k$.
 We notice  that all    the possible (non-empty)
    $\Pi_j$ have been listed in \cite[Table 3]{LL01} and \cite[Table 2.1]{Li}. By the classification, one can see $\lambda_B=\alpha_i^\vee$ in most of the cases, and will be able to  precisely describe       $\gamma$ and $ w_Pw_{P'}$  in the all the remaining cases. Therefore  the claims can be shown  by direct calculations. To be precise, we give the details as follows.

   For every $j$, we denote by  $ Q_{(j)}^\vee:= \sum_{\alpha\in \Pi_j}\mathbb{Z}\alpha^\vee$ the coroot sublattice of $\Pi_j$, by $R_{(j)}^+:= R^+\cap (\oplus_{\alpha\in \Pi_j}\mathbb{Z}\alpha)$ the set of positive roots of the root system of $\Pi_j$,  by $\theta_{(j)}$ the highest root in $R_{(j)}^+$, and  by $\alpha^{\rm ad}_{(j)}$ the (unique) simple root  in $\Pi_j$ adjacent to $\alpha_i$. Given $\beta\in R_{(j)}^+$, we write    $\beta= b_{\beta}\alpha^{\rm ad}_{(j)}+\sum_{\alpha\in \Pi_j\setminus\{\alpha^{\rm ad}_{(j)}\}} a_\alpha \alpha$, and denote by   $(\diamond)$ the property:
   $$(\diamond):\qquad \forall 1\leq j\leq k, \,\, \langle \alpha^{\rm ad}_{(j)}, \alpha_i^\vee\rangle =-1 \quad \mbox{and}\quad  b_{\theta_{(j)}}=1.
   $$
   Observe that any positive root $\beta$ in $R_P^+$ with $\langle \beta, \alpha_i^\vee\rangle \neq 0$ only if $\beta$ is in    the root subsystem spanned by $\Delta(\alpha_i)\setminus \alpha_i$. It suffices to look into such positive roots. Moreover, any such  root $\beta$ must be in   $R_{(j)}^+$ for some unique  $1\leq j\leq k$, and hence $\beta\leq \theta_{(j)}$ with respect to the   partial order $\leq $ given by $(c_1,\cdots, c_m)\leq (d_1, \cdots, d_m)$ iff $c_r\leq d_r$ for all $1\leq r\leq m$.   In particular, we have $0\leq b_\beta\leq b_{\theta_{(j)}}$. Thus if property $(\diamond)$ holds, then
     $$\langle \beta, \alpha_i^\vee\rangle =\langle b_\beta \alpha_{(j)}^{\rm ad}, \alpha_i^\vee\rangle=-b_{\beta}\in\{0, -1\}.$$
  It follows from the uniqueness in Lemma \ref{uniqueness} that $\lambda_B=\gamma^\vee$ for $\gamma=\alpha_i$. That is,   claim a) holds. Since
   $w_Pw_{P'}(\alpha_i)=\alpha_i+\sum_{\beta\in \Delta_P}\mathbb{Z}\beta$  and it is a root in $R$, it follows that $w_Pw_{P'}(\alpha_i)\in R^+$. Hence, $\ell(w_Pw_{P'}s_i)=\ell(w_Pw_{P'})+1$. That is, claim b) holds as well.
   We now check the property $(\diamond)$ across the Lie types.

   If $G$  is of Lie type $ADE$, then the property $(\diamond)$ obviously holds, and hence we are done. (Here we note that $\Pi_j$ can be at most of type $E_6, E_7$ but not of type $E_8$ since $\alpha_i\notin \Pi_j$.)

   For   $G$    of type $BCFG$, we just need to check the cases when the property $(\diamond)$ does not hold. We notice that $\{\alpha_i\}\cup \Pi_j$ is not of type $A$ for at most one $j$. If such $j$ exists, we say   $j=k$ without loss of generality.
   Write $\lambda_B=\alpha_i^\vee+\sum_{j=1}^k \lambda_j$ where $\lambda_j\in Q_{(j)}^\vee$.
   Then $\gamma_j^\vee:=\alpha_i^\vee+\lambda_j$ satisfies  $\gamma_j^\vee\in \alpha_i^\vee+ Q^\vee_{(j)}$ and $\langle \beta, \gamma_j^\vee\rangle=\langle \beta, \lambda_B\rangle \in \{0, -1\}$ for any  $\beta\in R^+_{(j)}$. It follows from the uniqueness in Lemma \ref{uniqueness} that $\lambda_j=0$ for $j<k$, in which case  $\{\alpha_i\}\cup \Pi_j$ is of type $A$. Thus   $\lambda_B=\gamma_k^\vee$.

 Suppose that $G$ is of type $B$ and the property $(\diamond)$ does not hold. Then $\Pi_k\cup\{\alpha_i\}$ must   also be of type $B$ with $\alpha_i$ being the (unique) short simple root at an end of the associated Dynkin diagram.  Therefore we can rename
 $\Pi_k\cup\{\alpha_i\}=\{\beta_1, \cdots, \beta_{m+1}\}$ such that it is  of type $B_{m+1}$ in the standard way and  $\alpha_i=\beta_{m+1}$ is the short simple root.
   For any $\beta\in R_{(k)}^+$,  $\langle \beta, \beta_m^\vee+\beta_{m+1}^\vee\rangle$ is equal to $0$ if $\beta$ has no nonzero coefficient at $\beta_{m-1}$, or     equal to  $\langle \beta_{m-1}, \beta_m^\vee+\beta_{m+1}^\vee\rangle=-1$ otherwise (where we use the property of   $\Pi_k$ being of type $A_m$). By the uniqueness in Lemma \ref{uniqueness}, we have
  $\lambda_B=\beta_m^\vee+\beta_{m+1}^\vee=s_{\beta_{m+1}}(\beta_m^\vee)$. Hence, $\lambda_B=\gamma^\vee$ for $\gamma= s_{\beta_{m+1}}(\beta_m)\in R^+$. Consequently, $s_\gamma=s_{\beta{m+1}}s_{\beta_m}s_{\beta_{m+1}}$ and $w_k=s_{\beta_2}\cdots s_{\beta_m}s_{\beta_1}\cdots s_{\beta_{m-1}}$ (where we mean $w_k=\mbox{id}$ if $m=1$).
  Clearly, $\ell(w_ks_\gamma)=\ell(w_k)+\ell(s_\gamma)=\ell(w_k)+2\langle \rho, \gamma^\vee\rangle -1$ by a direct calculation. Therefore the claims hold by noting $w_j=\rm id$ for $j=1, \cdots, k-1$. 

Suppose that $G$ is of type $G_2$ and the property $(\diamond)$ does not hold. Then $\Pi_k\cup\{\alpha_i\}=\{\beta_1,   \beta_{2}\}$,  and   $\alpha_i=\beta_{2}$ is the   short   root.
Everything is the same as the type  $B$ case with $m=1$ therein, except that $w_k= s_{\beta_1}$ for $G_2$. Clearly, the  claims hold.

Suppose that $G$ is of type $C$ and the property $(\diamond)$ does not hold. Then $\Pi_k\cup\{\alpha_i\}$ must     be of type $C$ with $\alpha_i$ being a short simple root at an end of the associated Dynkin diagram.  Thus we can rename
 $\Pi_k\cup\{\alpha_i\}=\{\beta_1, \cdots, \beta_{m+1}\}$ such that it is  of type $C_{m+1}$ in the standard way, $\alpha_i=\beta_{m+1}$ is a short simple root and $\beta_1$ is the long simple root. Note  $\langle \alpha, \sum_{j=1}^{m+1} \beta_j^\vee\rangle=0$ for any $\alpha\in \Pi_k$. By the uniqueness, we have
   $\lambda_B=\sum_{j=1}^{m+1} \beta_j^\vee= (s_{\beta_{m+1}}\cdots s_{\beta_2}(\beta_1) )^\vee$, and hence  $w_k={\rm id}$. The claims follow obviously.

 Suppose that $G$ is of type $F_4$. We may assume that $\Delta=\{\alpha_1, \alpha_2, \alpha_3, \alpha_4\}$ is already in the standard way such that $\alpha_1, \alpha_2$ are the long simple roots, and  $\alpha_1$, $\alpha_4$ are the two ends of the Dynkin diagram. There are $16$ possibilities of $\Delta_P$  in total, which can be easily discussed as follows.
 \begin{enumerate}
   \item Case $\alpha_1\notin \Delta_P$. If $\alpha_i\in\{\alpha_1, \alpha_2\}$, then the property $(\diamond)$ holds, and hence we are done. Otherwise,  $\alpha_i\in \{\alpha_3, \alpha_4\}$, and this is reduced to type $C$ case, so that we are done again.
    \item Case $\alpha_1\in \Delta_P$ and $\alpha_2\notin \Delta_P$.  Then either the property $(\diamond)$ holds or it is reduced to type $C$ case.
     \item Case $\alpha_1\in \Delta_P$ and $\alpha_2\in \Delta_P$. If $\alpha_3\notin \Delta_P$, then either the property $(\diamond)$ holds or it  is reduced to type $B$ case.   If $\alpha_3\in \Delta_P$,
 then $\alpha_i=\alpha_4$.  By computing $\langle \beta, \sum_{j=2}^{4} \alpha_j^\vee\rangle$ for all $\beta\in R_P^+$, we conclude $\lambda_B=\sum_{j=2}^{4} \alpha_j^\vee=s_{{4}}  s_{3}(\alpha_2^\vee)=\gamma^\vee$ for $\gamma=s_4s_3(\alpha_2)\in R^+$,  by the uniqueness of $\lambda_B$. Consequently,    $w_k=s_{1}s_{2}s_{3}s_{2}s_{1}$, and hence the claims follow by direct calculations.
  \end{enumerate}
  The nonvanishing of $c_{s_i, u}^{(0^P)^\vee, \lambda_P}$ implies that $\ell(u)=\deg (q_i)+\ell(0^P)-\ell(s_i)=n_i-1$.
 \end{proof}

\begin{remark}
 Given $\gamma\in R^+$, the property $\ell(\gamma)=2\langle\rho, \gamma^\vee\rangle -1$ always holds whenever the Dynkin diagram of $G$ is simply laced (i.e., of Lie type $ADE$) \cite[Lemma 3.2]{mare}. In general, the property requires a nontrivial condition \cite[Lemma 3.8]{LL01}.
\end{remark}

\begin{lemma}\label{construction}
For each $i\in I^P,$ there is a cycle $\Xi_i$ of  length $n_i$ in $D(X)$ through the fixed vertex $\sigma_{0^P}$.
\end{lemma}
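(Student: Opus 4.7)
The plan is to build the cycle $\Xi_i$ by concatenating a ``classical'' forward path of length $n_i-1$ from $\sigma_{0^P}$ to some well-chosen $\sigma_u$ with a single ``quantum'' backward arc from $\sigma_u$ to $\sigma_{0^P}$. The quantum arc is provided essentially for free by Lemma \ref{qiterm}: that lemma yields an element $u\in W^P$ with $\ell(u)=n_i-1$ such that $\sigma_{s_i}\star \sigma_u$ contains the term $q_i\sigma_{0^P}$ with coefficient $1$. Since $c_1(X)=\sum_{j\in I^P}n_j\sigma_{s_j}$ and all $n_j$ are positive integers, after specializing at $q=\vec 1$ this contributes a positive coefficient of $\sigma_{0^P}$ to $[c_1(X)](\sigma_u)$. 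Hence $(\sigma_u,\sigma_{0^P})\in\mathrm{Arc}(D(X))$.

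Next I would construct the forward path from $\sigma_{0^P}$ to $\sigma_u$ using only the classical part of the Chevalley formula. Fix a reduced decomposition $u=s_{j_1}s_{j_2}\cdots s_{j_{n_i-1}}$ and set
\[
u_k := s_{j_k}s_{j_{k+1}}\cdots s_{j_{n_i-1}}\quad (1\le k\le n_i-1),\qquad u_{n_i}:=\mathrm{id}=0^P,
\]
so that $\ell(u_k)=n_i-k$ and $u_1=u$. Iterated application of Lemma \ref{reducedword} to $u_k$ (whose displayed decomposition is reduced) gives that each $u_k$ lies in $W^P$ and that $\gamma_k:=u_{k+1}^{-1}(\alpha_{j_k})\in R^+\setminus R_P^+$. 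By Remark \ref{remark-degree} there exists $p\in I^P$ with $h_{\gamma_k}(\omega_{\alpha_p})\ne 0$, so the Chevalley formula shows that $\sigma_{u_k}$ appears with positive coefficient in the classical part of $\sigma_{s_p}\star\sigma_{u_{k+1}}$. Since $n_p>0$ this forces $\sigma_{u_k}$ to appear with positive coefficient in $[c_1(X)](\sigma_{u_{k+1}})$, yielding an arc $(\sigma_{u_{k+1}},\sigma_{u_k})$ in $D(X)$ for each $0\le k\le n_i-1$.

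Concatenating, I obtain
\[
\sigma_{0^P}=\sigma_{u_{n_i}}\longrightarrow \sigma_{u_{n_i-1}}\longrightarrow\cdots\longrightarrow \sigma_{u_1}=\sigma_u \longrightarrow \sigma_{0^P},
\]
a closed walk through $\sigma_{0^P}$ consisting of $n_i-1$ classical arcs followed by one quantum arc, so of total length $n_i$. This is the desired cycle $\Xi_i$.

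Conceptually there is no serious obstacle left, as Lemma \ref{qiterm} already does the heavy combinatorial lifting; the only point requiring care is the verification that removing simple reflections from the \emph{left} of a reduced expression of $u$ keeps the intermediate elements inside $W^P$ and produces a positive root outside $R_P^+$ at each step, which is exactly the content of Lemma \ref{reducedword}. Everything else is a bookkeeping assembly of arcs in $D(X)$.
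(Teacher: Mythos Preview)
Your proof is correct and follows essentially the same approach as the paper's: invoke Lemma~\ref{qiterm} for the single quantum arc back to $\sigma_{0^P}$, then use Lemma~\ref{reducedword} on successive left-truncations of a reduced word for $u$ to produce the $n_i-1$ classical arcs. The only slip is the index range ``$0\le k\le n_i-1$'' for the classical arcs, which should read $1\le k\le n_i-1$ (your $u_0$ is undefined, and the concatenation you write out immediately afterward uses the correct count).
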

\begin{proof}
By Lemma \ref{qiterm}, there exists $u\in W^P$ of length $n_i-1$ such that $\sigma_{0^P}$ occurs in $\sigma_{s_i}\star \sigma_u$, and hence occurs in the expansion $[c_1(X)](\sigma_{u})$. Thus there is a   path $\mathrm{PATH}(u: 0^P)$ of length 1 from the vertex $\sigma_u$ to the vertex  $\sigma_{0^P}$.
Now take a reduced decomposition $u=s_{j_1}s_{j_2}\cdots s_{j_{n_i-1}}$, set $v_m:=s_{j_m}s_{j_{m+1}}\cdots s_{j_{n_i-1}}$ for $1\leq m\leq n_i-1$, and denote $v_0=v_{n_i}:=0^P$. Then
 we have $v_m\in W^P$ and  $v_{m+1}^{-1}(\alpha_{j_m})\in R^+\setminus R_P^+$ for all $1\leq m\leq n_i-1$ by Lemma \ref{reducedword}. Consequently, $\sigma_{v_m}$ occurs in $[c_1(X)](\sigma_{v_{m+1}})$ by the (quantum) Chevalley formula, resulting in a path $\mathrm{PATH}(v_{m+1}: v_m)$ of length $1$ for all $1\leq m\leq n_i-1$. Clearly, the join of paths
  $$\mathrm{PATH}(v_{n_i}: v_{n_i-1})\sqcup_{v_{n_i-1}}   \cdots \sqcup_{v_{2}}  \mathrm{PATH}(v_{2}: v_{1}) \sqcup_{v_1}\mathrm{PATH}(u: 0^P)$$ is a cycle of length $n_i$ through $\sigma_{0^P}$, by noting that $v_1=u$ and $v_{n_i}=0^P$.
\end{proof}

Recall that   $h(D(X))$ is the greatest common divisor (g.c.d) of the lengths of all cycles of $D(X)$. Therefore it divides  the g.c.d  of the lengths of the cycles $\{\Xi_i~|~ i\in I^P\}$ through $\sigma_{0^P}$. Namely,   $h(D(X))$ divides g.c.d.$\{n_i~|~i\in I^P\}=r$ by Lemma \ref{construction} and the definition of the Fano index $r$. Since $h(D(X))=h(M(X)_q)=h(M(X))$,   $h(M(X))$ divides $r$.  This fact together with Lemma \ref{coro-divide-p} implies
\begin{corollary}\label{equal2}
The imprimitivity index $h(M(X))$ of the irreducible matrix $M(X)$ is equal to the Fano index $r$.
\end{corollary}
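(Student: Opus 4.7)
The plan is to establish $h(M(X)) = r$ by proving divisibility in both directions, using only the preceding lemmas.

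First I would note that Lemma \ref{coro-divide-p} already supplies one direction: $r$ divides $h(M(X))$. So the corollary reduces to establishing the reverse divisibility $h(M(X)) \mid r$. To do this, I would translate the question from matrices to the associated directed graph via Proposition \ref{equal}(2), which gives $h(M(X)) = h(D(X))$ (and the matrix is irreducible by Proposition \ref{pro-irreducible}, so the associated graph $D(X)$ is strongly connected by Proposition \ref{equal}(1), making the index of imprimitivity well-defined on the graph side).

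Once in the graph setting, the key tool is Lemma \ref{construction}, which produces, for each $i \in I^P$, an explicit cycle $\Xi_i$ of length $n_i$ in $D(X)$ passing through the fixed vertex $\sigma_{0^P}$. Since $h(D(X))$ is by definition the greatest common divisor of the lengths of \emph{all} cycles in $D(X)$, it must in particular divide $\gcd\{\ell(\Xi_i) : i \in I^P\} = \gcd\{n_i : i \in I^P\} = r$ by definition of the Fano index. Chaining the equalities, $h(M(X)) = h(D(X)) \mid r$.

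Combining the two divisibilities, $h(M(X)) = r$. There is no real obstacle at this stage; all the substantive work has already been carried out in Lemma \ref{coro-divide-p} (showing $r \mid h(M(X))$ via the block structure of $M(X)_q$ of Lemma \ref{lemma-divide}) and in Lemma \ref{construction} (constructing the cycles $\Xi_i$, which itself depended on the comparison formula of Proposition \ref{propcomparison} and the careful case analysis in Lemma \ref{qiterm} needed to produce a $q_i\sigma_{0^P}$-term in $\sigma_{s_i} \star \sigma_u$ for some $u$ of length $n_i - 1$). The only mild conceptual point worth highlighting in the write-up is that the cogredience of $M(X)$ and $M(X)_q$ (arising from the reordering of the Schubert basis into $\mathcal{S}_q$) preserves the imprimitivity index, so that applying Proposition \ref{Pro divide} to $M(X)_q$ yields the corresponding divisibility statement for $M(X)$ itself.
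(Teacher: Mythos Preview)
Your proposal is correct and follows essentially the same route as the paper: use Lemma~\ref{coro-divide-p} for $r\mid h(M(X))$, pass to the directed graph via Proposition~\ref{equal}(2) to get $h(M(X))=h(D(X))$, and then invoke the cycles $\Xi_i$ of length $n_i$ from Lemma~\ref{construction} to conclude $h(D(X))\mid \gcd\{n_i\}=r$. The paper's write-up is nearly identical, including the observation that cogredience of $M(X)$ and $M(X)_q$ preserves the imprimitivity index.
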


\begin{theorem}
Homogeneous spaces $X=G/P$ satisfy Conjecture $\mo.$
\end{theorem}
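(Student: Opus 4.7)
The proof essentially assembles the machinery that has been set up in the preceding sections, so my plan is to verify each of the three conditions of Conjecture $\mathcal{O}$ in turn, reading off conditions (1) and (2) directly from Perron--Frobenius and deducing condition (3) from the index of imprimitivity computation.

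First, I would recall that by Proposition \ref{pro-irreducible} the matrix $M(X)$ of the quantum multiplication operator $[c_1(X)]$ with respect to the Schubert basis $\mathcal{S}$ is an irreducible, nonnegative, integral matrix. Applying Perron--Frobenius (Proposition \ref{Perron-Frob}) to $M(X)$ yields a real positive eigenvalue, which must coincide with $\delta_0$ (since by definition $\delta_0$ is the maximal modulus of eigenvalues and Perron--Frobenius produces the eigenvalue of largest modulus), and this eigenvalue has multiplicity one. This gives conditions (1) and (2) immediately.

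For condition (3), I would invoke Proposition \ref{prop-angle}: for an irreducible matrix with index of imprimitivity $h = h(M(X))$, the eigenvalues of modulus $\delta_0$ are exactly the $h$ distinct roots of $\lambda^h - \delta_0^h = 0$, namely $\delta_0 \zeta$ as $\zeta$ ranges over the $h$-th roots of unity. The crucial input is Corollary \ref{equal2}, which asserts $h(M(X)) = r$, where $r$ is the Fano index of $X$. Substituting $h = r$ shows that every eigenvalue of $[c_1(X)]$ of modulus $\delta_0$ is of the form $\delta_0 \xi$ for an $r$-th root of unity $\xi$, which is precisely condition (3).

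Since all three conditions are verified, the theorem follows. There is essentially no obstacle left at this stage; the real technical work has already been carried out in establishing the irreducibility of $M(X)$ (Proposition \ref{pro-irreducible}) and in computing the index of imprimitivity via the cycle construction (Lemmas \ref{lemma-divide}, \ref{qiterm}, \ref{construction} and Corollary \ref{equal2}). The proof of the theorem itself is thus a short assembly of Perron--Frobenius theory with these two key inputs.
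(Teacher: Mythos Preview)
Your proof is correct and follows exactly the same approach as the paper: invoke Proposition~\ref{pro-irreducible} together with Perron--Frobenius (Proposition~\ref{Perron-Frob}) for conditions (1) and (2), and combine Corollary~\ref{equal2} with Proposition~\ref{prop-angle} for condition (3). The paper's version is simply a terser statement of the same assembly.
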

\begin{proof}
Since $M(X)$ is irreducible, Condition $(1)$ and $(2)$ are satisfied by Proposition \ref{Perron-Frob}. Condition $(3)$ follows from Corollary \ref{equal2} and Proposition \ref{prop-angle}.
\end{proof}

\end{document}